\newcounter{example}[section]
\newenvironment{example}[1][]{\refstepcounter{example}\par\medskip
	\noindent \textbf{Example~\theexample. #1} \rmfamily}{\medskip}
\newcommand{\be}{\begin{equation}}
\newcommand{\ee}{\end{equation}}
\newcommand{\ben}{\begin{eqnarray*}}
	\newcommand{\een}{\end{eqnarray*}}
\newtheorem{examp}{\sc Example}
\newtheorem{remk}{\sc Remark}
\newtheorem{corol}{\sc Corollary}
\newtheorem{lemma}{\sc Lemma}
\newtheorem{theorem}{\sc Theorem}
\newtheorem{defn}{\sc Definition}
\newcommand{\bt}{\begin{theorem}}
	\newcommand{\et}{\end{theorem}}
\newtheorem{proposition}{Proposition}[section]
\newtheorem{definition}{Definition}[section]
\newtheorem{remark}{Remark}[section]
\newcommand{\bl}{\begin{lemma}}
	\newcommand{\el}{\end{lemma}}
\newcommand{\bed}{\begin{defn}}
	\newcommand{\eed}{\end{defn}}
\newcommand{\brem}{\begin{remk}}
	\newcommand{\erem}{\end{remk}}
\newcommand{\bex}{\begin{examp}}
	\newcommand{\eex}{\end{examp}}
\newcommand{\bcl}{\begin{corol}}
	\newcommand{\ecl}{\end{corol}}
\newcommand{\bea}{\begin{eqnarray}}
\newcommand{\eea}{\end{eqnarray}}
\journal{Journal of Optimization Theory and Applications}
\begin{document}

\begin{frontmatter}

\author{Bharat Pratap Chauhan\corref{cor}}
\cortext[cor]{Corresponding Author}\ead{bc102@snu.edu.in}
\author{Dipti Dubey}
\ead{dipti.dubey@snu.edu.in}
\address{Department of Mathematics\\ 
Shiv Nadar Institution of Eminence\\
Delhi-NCR 201314, India\\}

\title{On Almost-Type Special Structured Tensor Classes Associated with Semi-Positive Tensors}

\begin{abstract} In this paper, we introduce almost (strictly) semi-positive tensors, which extend the concept of almost (strictly) semimonotone matrices. Furthermore, we provide insights into the characteristics of the entries within these almost (strictly) semi-positive tensors and establish a condition that is both necessary and sufficient for categorizing the underlying tensor as an almost semi-positive tensor. Drawing inspiration from H. V\"{a}liaho's work on copositivity, we present the concept of almost (strictly) copositive tensors, which extends the notion of almost (strictly) copositive matrices to tensors. It is shown that a real symmetric tensor is  almost (strictly) semi-positive if and only if it is almost (strictly) copositive and a symmetric almost (strictly) semi-positive tensor has a (nonpositive) negative $H^{++}$-eigenvalue. We also establish a relationship between (strictly) diagonally dominant and (strong) $\mathcal{M}$-tensors with (strictly) semi-positive tensors.
\end{abstract}

\begin{keyword} Almost semi-positive tensors\sep Almost semimonotone matrices\sep Semi-positive tensors, Copositive tensors\sep Tensor complementarity problem.\\
AMS Classification: 15A69, 15B48, 90C33
\end{keyword}

\end{frontmatter}

\section{Introduction}

In multilinear algebra, several topics, including tensors, have attracted considerable attention in recent years. Tensors are generalizations to the concept of matrices in multilinear algebra. A multi-dimensional array or a hyper-matrix is referred to as a ``tensor'' in this context. A tensor can be seen as an extension of a matrix to higher orders, represented by:
\[
\mathcal{A} = (a_{i_1 \ldots i_m}), \quad a_{i_1 \ldots i_m} \in \mathbb{R}, \quad 1 \leq i_1, \ldots, i_m \leq n.
\]
The array $\mathcal{A}$, which is multidimensional, is an $m$th order tensor in $n$ dimensions, comprised of real numbers. It contains $n^m$ entries, denoted by $a_{i_1 \ldots i_m}$. As interest in multilinear algebra increases, especially in the domain of higher order tensors, the application area of structured matrices \cite{bpc,cps} has expanded to include higher order situations. Specially structured tensors play a significant role in tensor theory. Song and Qi \cite{sy} extended the concepts of (strictly) semimonotone matrices to (strictly) semi positive tensors. Semi positive tensors are those real tensors for which the operation $\mathcal{A}\mathbf{x}^{m-1}$ does not negate all positive entries of any nonzero, entrywise nonnegative vector $\mathbf{x}$, while a strictly semi positive tensor is a real tensor for which the operation $\mathcal{A}\mathbf{x}^{m-1}$ does not negate or turn to zero all positive entries of any nonzero, entrywise nonnegative vector $\mathbf{x}$ (see \cite{sy}). The class of (strictly) semi positive tensors is a generalization of the class of (strictly) copositive tensors, which contains (positive) nonnegative tensors. Also, each $\mathbf{P_0}$-tensor is certainly semi positive and each $\mathbf{P}$-tensor is strictly semi positive. For the basic properties and applications of the class of (strictly) copositive, and ($\mathbf{P}$ ) $\mathbf{P_0}$-tensors one can refer to \cite{copos,ptensor,Lqi}.

The concept of the tensor complementarity problem (TCP) was initially introduced by Song and Qi \cite{song,sy}. The tensor complementarity problem (TCP), a special case of the nonlinear complementarity problem, has garnered attention in the existing literature. Huang and Qi \cite{nonc} formulated an n-person noncooperative game as the tensor complementarity problem with a given nonnegative tensor. Consider a tensor $\mathcal{A} \in \mathbb{R}_{m, n}$ and a vector $\mathbf{q} \in \mathbb{R}^{n}$. The problem known as the tensor complementarity problem (TCP), denoted $\operatorname{TCP}(\mathbf{q}, \mathcal{A})$, seeks to either identify a vector $\mathbf{x} \in \mathbb{R}^{n}$ satisfying the conditions:
\begin{center}
$\mathbf{0} \leq \mathbf{x}, \quad \mathbf{0} \leq \mathbf{q}+\mathcal{A} \mathbf{x}^{m-1}, \quad \left\langle\mathbf{x}, \mathbf{q}+\mathcal{A} \mathbf{x}^{m-1}\right\rangle = \mathbf{0},$
\end{center}
or to establish the non-existence of such a vector. It is evident that $\operatorname{TCP}(\mathbf{q}, \mathcal{A})$ serves as a natural extension of the linear complementarity problem (when $m = 2$). Comprehensive insights into the properties and fundamental aspects of tensor complementarity problems can be found in references \cite{tcp1,tcp2,tcp3,sy}.

The class of (strictly) semi positive tensors, along with the properties of various structured tensors, plays a crucial role in the theory and study of the tensor complementarity problem \cite{Ysong}. A significant challenge with (strictly) semipositive tensors is the difficulty in both constructing and detecting them. In this paper, we construct special structured tensors whose proper principal subtensors maintain these structured tensor properties. Therefore, as we aim to establish a new characterization of (strictly) semipositive tensors, we find it beneficial to extend the concept of \textit{almost (strictly) semimonotone matrices} to \textit{almost (strictly) semipositive tensors}. An almost (strictly) semi positive tensor is one that is not (strictly) semi positive but has proper principal subtensors that are (strictly) semi positive. For further insight into the properties and basic facts of \textit{almost (strictly) semimonotone} matrices, refer to \cite{bpc,tsat,wen}. Additionally, for a comprehensive understanding of the basic properties and applications of the class of (strictly) semi positive tensors, see \cite{song,strict,semi}.

Inspired by the above studies of tensors, our main contributions in this paper are as follows.
\begin{itemize}
    \item First, we establish a relationship between (strictly) diagonally dominant and (strong) $\mathcal{M}$-tensors with (strictly) semi positive tensors. Moreover, a well-known result is that $A$ is a semimonotone matrix ($\mathbf{E_0}$) if and only if $A$ is a completely $\mathbf{S_{0}}$-matrix, (see \cite{cps}). The following question is natural. Can we extend this result of matrices to tensors? 
\item Next, we formally introduce a class of almost (strictly) semi positive tensors. We provide insights into the characteristics of the entries within these almost (strictly) semi positive tensors. Furthermore, we establish a necessary and sufficient condition for categorizing the underlying tensor as an almost semi positive tensor.
\item Lastly, drawing inspiration from H. V\"{a}liaho's work on copositivity \cite{H,val}, we present the concept of {\it almost (strictly) copositive tensors}, which extends the notion of {\it almost (strictly) copositive matrices} to tensors. These tensors play a pivotal role in establishing criteria for copositivity. It is established that for a real symmetric tensor, being almost (strictly) semi positive is equivalent to being almost (strictly) copositive. We also provide a non-trivial counterexample to show that the above result holds only under the symmetric assumption.
\end{itemize}

The rest of this paper is structured as follows. Section \ref{sec:preliminaries} presents fundamental symbols, definitions, and result. In Section \ref{sec:Semi positive tensors}, we explore several results associated with tensors that are (strictly) semi positive. Then, in Section \ref{sec:Almost (strictly) semi positive tensors}, we introduce the notion of almost (strictly) semi positive tensors and explore the properties related to their entries.
 Additionally, in Section \ref{sec: Main results}, we introduce the notion of almost (strictly) copositive tensors and establish their connection with almost (strictly) semi positive tensors.

\section{Preliminaries}\label{sec:preliminaries}

This section provides definitions and notations that will prove beneficial for our exploration. In this document, we denote scalars with small letters $x, y, \ldots$, vectors with small bold letters $\mathbf{x}, \mathbf{y}, \ldots$, matrices with capital letters $A, B, \ldots$, and tensors with calligraphic letters $\mathcal{A}, \mathcal{B}, \ldots$. It is important to note that all tensors, matrices, and vectors discussed in this document consist of real entries. The $n$-dimensional Euclidean space equipped with the usual inner product is denoted by $\mathbb{R}^n$. We write $\mathbf{x} \geq \mathbf{0}$ $(\mathbf{x} > \mathbf{0})$ if every component of vector $\mathbf{x}$ is nonnegative (positive). Let  $I_{n}:=\{1, \ldots, n\}$, and $\mathbb{R}^{n}:=\left\{\mathbf{x} = (x_1, \ldots, x_n)^{T} : x_i \in \mathbb{R}, i \in I_n\right\}$, $\mathbb{R}_{+}^{n}:=\left\{\mathbf{x} \in \mathbb{R}^{n}: \mathbf{x} \geq \mathbf{0}\right\}$, $\mathbb{R}^{n}_{++}:=\left\{\mathbf{x} \in \mathbb{R}^{n}: \mathbf{x} > \mathbf{0}\right\}$, where $\mathbb{R}$ is the set of real numbers \cite{tensor}. Any vector $\mathbf{x}\in \mathbb{R}^{n}$ is a column vector unless otherwise specified, and $\mathbf{x}^{T}$ denotes the transpose of $\mathbf{x}$.

For a square matrix $A \in \mathbb{R}^{n \times n}$ and a vector $\mathbf{q} \in \mathbb{R}^{n}$, the linear complementarity problem (LCP) is concerned with finding a vector $\mathbf{x} \in \mathbb{R}^{n}$ such that
\[
\mathbf{0} \leq \mathbf{x}, \quad \mathbf{0} \leq \mathbf{q} + A \mathbf{x}, \quad \langle\mathbf{x}, \mathbf{q} + A \mathbf{x} \rangle = \mathbf{0},
\]
or proving the non-existence of such a vector. This problem is denoted as $\operatorname{LCP}(\mathbf{q}, A)$ (see \cite{cps}).

\begin{definition}
 A real matrix $A \in \mathbb{R}^{n \times n}$ is said to be

    \begin{enumerate}
        \item semimonotone \cite{tsat}, denoted by $\mathbf{E_0}$, if for each $\mathbf{0 \neq x \geq 0}$, there exists an index $k \in I_n$ such that $x_k > 0$ and $(A\mathbf{x})_k \geq 0$;
        
        \item strictly semimonotone \cite{tsat}, denoted by $\mathbf{E}$, if for each $\mathbf{0 \neq x \geq 0}$, there exists an index $k \in I_n$ such that $x_k > 0$ and $(A\mathbf{x})_k > 0$;

        \item almost (strictly) semimonotone \cite{wen} if all proper principal submatrices of $A$ are (strictly) semimonotone and there exists a vector $\mathbf{x > 0}$ such that $A\mathbf{x < 0}$ (resp. $A\mathbf{x \leq 0}$);
        
        \item copositive \cite{cps} if $\mathbf{x}^{T}A\mathbf{x} \geq 0$ for all $\mathbf{x} \in \mathbb{R}_{+}^{n}$; and strictly copositive if $\mathbf{x}^{T}A\mathbf{x} > 0$ for all $\mathbf{x} \in \mathbb{R}_{+}^{n}\backslash \{\mathbf{0}\}$;
        
        \item almost (strictly) copositive \cite{val} if it is (strictly) copositive of order $(n-1)$ but not of order $n$;

        \item $\mathbf{Q}$-matrix \cite{cps} if $\operatorname{LCP}(\mathbf{q}, A)$ has a solution for all $\mathbf{q} \in \mathbb{R}^{n}$; and completely $\mathbf{Q}$-matrix, denoted by $\mathbf{\bar Q}$, if $A$ and all of its principal submatrices are $\mathbf{Q}$-matrices;
        
        \item $\mathbf{S}$-matrix \cite{cps} if there exists a vector $\mathbf{x > 0}$ such that $A\mathbf{x > 0}$; and completely $\mathbf{S}$-matrix, denoted by $\mathbf{\bar S}$, if $A$ and all of its principal submatrices are $\mathbf{S}$-matrices;
        
        \item $\mathbf{S_{0}}$-matrix \cite{cps} if there exists a vector $\mathbf{0 \neq x \geq 0}$ such that $A\mathbf{x \geq 0}$; and completely $\mathbf{S_0}$-matrix, denoted by $\mathbf{\bar S_{0}}$, if $A$ and all of its principal submatrices are $\mathbf{S_0}$-matrices.
    \end{enumerate}
\end{definition}

A real tensor $\mathcal{A}$ of order $m$ and dimension $n$, denoted as $\mathcal{A}=\left(a_{i_{1} \ldots i_{m}}\right)$, represents a multi-dimensional array with entries $a_{i_{1} \ldots i_{m}} \in \mathbb{R}$, where $i_{j} \in I_{n}$ for $j \in I_{m}$. If the entries $a_{i_1 \ldots i_m}$ of real tensor $\mathcal{A}$ in $\mathbb{R}_{m, n}$ remain unchanged under any permutation of their indices $\{i_1 \ldots i_m\}$, then $\mathcal{A}$ is termed as \textit{symmetric}. The collection of all the real tensors and real symmetric tensors of order $m$ and dimension $n$ is denoted as $\mathbb{R}_{m, n}$ and $S_{m, n}$, respectively. The identity tensor and zero tensor in $\mathbb{R}_{m, n}$ are represented by $\mathcal{I}$ and $\mathcal{O}$, respectively.

Consider a tensor $\mathcal{A}=\left(a_{i_{1} \ldots i_{m}}\right) \in \mathbb{R}_{m, n}$ and a vector $\mathbf{x} \in \mathbb{R}^{n}$. The operation $\mathcal{A} \mathbf{x}^{m-1}$ results in a vector in $\mathbb{R}^{n}$, where each element is calculated as
$$
\left(\mathcal{A} \mathbf{x}^{m-1}\right)_{i}:=\sum_{i_{2}, \ldots, i_{m}=1}^{n} a_{i i_{2} \ldots i_{m}} x_{i_{2}}\ldots x_{i_{m}}, \quad \forall i \in I_{n}.
$$
Moreover, $\mathcal{A} \mathbf{x}^{m}$ defines a homogeneous polynomial of degree $m$, given by
$$
\mathcal{A} \mathbf{x}^{m}:=\left\langle\mathbf{x}, \mathcal{A} \mathbf{x}^{m-1}\right\rangle=\sum_{i_{1}, i_{2}, \ldots, i_{m}=1}^{n} a_{i_{1} i_{2} \ldots i_{m}} x_{i_{1}} x_{i_{2}} \ldots x_{i_{m}}.
$$

A tensor $\mathcal{C} \in T_{m, r}$ is called a {\it principal sub-tensor} of a tensor $\mathcal{A}=\left(a_{i_1 \ldots i_m}\right) \in \mathbb{R}_{m, n}$ $(1 \leq r \leq n)$ if there is a set $J$ that composed of $r$ elements in $[n]$ such that
$$
\mathcal{C}=\left(a_{i_1 \ldots i_m}\right), \quad \text { for all } i_1, i_2, \ldots, i_m \in J
$$
We define $\mathcal{A}_r^J$ as the principal sub-tensor of a tensor $\mathcal{A} \in \mathbb{R}_{m, n}$, where the entries of $\mathcal{A}_r^J$ are indexed by $J \subseteq I_{n}$ with $|J|=r$ $(1 \leq r \leq n)$ \cite{lq} and, $\mathbf{x}_J$ denotes the $r$-dimensional sub-vector of a vector $\mathbf{x} \in \mathbb{R}^n$, with its components indexed by $J$. It is worth noting that diagonal entries are represented by the principal sub-tensors for $r = 1$.

Let $\mathcal{A}$ be a real tensor in $\mathbb{R}_{m, n}$. A complex valued element $\lambda$ is an {\it eigenvalue} of a real tensor $\mathcal{A}$ if there exists a non-zero vector $\mathbf{x}$ in $\mathbb{C}^n$ that solves the following system of equations:
\[
\mathcal{A}\mathbf{x}^{m-1} =\lambda \mathbf{x}^{[m-1]},
\] 
where $\mathbf{x}^{[m-1]} = (x_1^{m-1}, \ldots, x_n^{m-1})^T$. We refer to $\mathbf{x}$ as an \textit{eigenvector} of $\mathcal{A}$ corresponding to the eigenvalue $\lambda$. A particular eigenvalue is termed an \textit{H-eigenvalue} if it is real and possesses a real eigenvector $\mathbf{x}$, which is then termed an \textit{H-eigenvector}. An H-eigenvalue $\lambda$ of $\mathcal{A}$ is classified as:

(i) an H$^+$-eigenvalue of $\mathcal{A}$ if its associated H-eigenvector $\mathbf{x} \in \mathbb{R}^n_+$; and 

(ii) an H$^{++}$-eigenvalue of $\mathcal{A}$ if its associated H-eigenvector $\mathbf{x} \in \mathbb{R}^{n}_{++}$, (see \cite{nsc}).

\begin{definition}
Let $\mathcal{A}=\left(a_{i_{1} \ldots i_{m}}\right)$ denote a real tensor from $\mathbb{R}_{m, n}$. It is characterized as

\begin{enumerate}
    \item \textbf{semi positive} (denoted by $\mathbf{E_0}$-tensor) \cite{Ysong} if and only if for each $\mathbf{0 \neq x \geq 0}$, there exists an index $k \in I_{n}$ such that $x_{k}>0$ and $\left(\mathcal{A} \mathbf{x}^{m-1}\right)_{k} \geq 0$;
    \item \textbf{strictly semi positive} (denoted by $\mathbf{E}$-tensor) \cite{Ysong} if and only if for each $\mathbf{0 \neq x \geq 0}$, there exists an index $k \in I_{n}$ such that $x_{k}>0$ and $\left(\mathcal{A} \mathbf{x}^{m-1}\right)_{k}>0$;
    \item \textbf{copositive} (denoted by $\mathbf{C_0}$-tensor )\cite{Lqi} if and only if $\mathcal{A} \mathbf{x}^{m} \geq 0$ for all $\mathbf{x} \in \mathbb{R}_{+}^{n}$;
    \item \textbf{strictly copositive} (denoted by $\mathbf{C}$-tensor)\cite{Lqi} if and only if $\mathcal{A} \mathbf{x}^{m}>0$ for all $\mathbf{x} \in \mathbb{R}_{+}^{n} \backslash\{\mathbf{0}\}$;
     \item \textbf{$\mathbf{P_0}$-tensor}\cite{song} if and only if for any nonzero vector $\mathbf{x} \in \mathbb{R}^{n},$ there exists $i \in I_n$ such that $x_i \neq 0$ and
    \[
    x_i (\mathcal{A} \mathbf{x}^{m-1}>\mathbf{0})_i \geq 0;
    \]
    \item \textbf{$\mathbf{P}$-tensor}\cite{song} if and only if for any nonzero vector $\mathbf{x} \in \mathbb{R}^{n},$
    \[
    \max_{i \in I_n} x_i (\mathcal{A} \mathbf{x}^{m-1}>\mathbf{0})_i > 0;
    \]
    \item \textbf{$\mathbf{S_{0}}$-tensor}\cite{Gyu} if and only if the system
    \[
    \mathcal{A} \mathbf{x}^{m-1} \geq \mathbf{0}, \quad \mathbf{0 \neq x \geq 0}
    \]
    has a solution; and \textbf{completely $\mathbf{S_{0}}$-tensor} if and only if all its principal sub-tensors are $\mathbf{S_{0}}$-tensors;
    \item \textbf{S-tensor}\cite{mm,Gyu} if and only if the system
    \[
    \mathcal{A} \mathbf{x}^{m-1}>\mathbf{0}, \quad \mathbf{x}>\mathbf{0}
    \]
    has a solution; and \textbf{completely S-tensor} if and only if all its principal sub-tensors are $\mathbf{S}$-tensors.  
\end{enumerate}
\end{definition}

\begin{proposition}
Let $\mathcal{A}$ be a real $m$th order $n$-dimensional tensor. Then $\mathcal{A}$ is an S-tensor iff the system
\[
\mathcal{A} \mathbf{x}^{m-1} > \mathbf{0}, \quad \mathbf{x} \geq \mathbf{0}
\]
has a solution, (see Proposition 3.1. in \cite{Gyu}).
\end{proposition}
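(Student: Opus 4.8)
The plan is to dispose of the ``only if'' direction at once and then prove the converse by a one-line perturbation argument, using that the strict inequality $\mathcal{A}\mathbf{y}^{m-1} > \mathbf{0}$ is an open condition on $\mathbf{y}$ because the map $\mathbf{y} \mapsto \mathcal{A}\mathbf{y}^{m-1}$ is polynomial, hence continuous.

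For the ``only if'' direction: if $\mathcal{A}$ is an S-tensor, then by definition there is $\mathbf{x} > \mathbf{0}$ with $\mathcal{A}\mathbf{x}^{m-1} > \mathbf{0}$, and $\mathbf{x} > \mathbf{0}$ trivially gives $\mathbf{x} \geq \mathbf{0}$, so the displayed system is solvable. Nothing further is required here.

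For the converse, suppose $\mathbf{x} \geq \mathbf{0}$ satisfies $\mathcal{A}\mathbf{x}^{m-1} > \mathbf{0}$; note first that $\mathbf{x} \neq \mathbf{0}$, since otherwise $\mathcal{A}\mathbf{x}^{m-1} = \mathbf{0}$. Let $\mathbf{e} = (1,\dots,1)^{T}$ and set $\mathbf{x}_{t} := \mathbf{x} + t\mathbf{e}$ for $t > 0$, so that $\mathbf{x}_{t} > \mathbf{0}$. Each coordinate $\mathbf{y} \mapsto (\mathcal{A}\mathbf{y}^{m-1})_{i}$ is a homogeneous polynomial of degree $m-1$ in the entries of $\mathbf{y}$, hence continuous, so $\mathcal{A}\mathbf{x}_{t}^{m-1} \raro \mathcal{A}\mathbf{x}^{m-1}$ as $t \raro 0^{+}$. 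Since $\mathcal{A}\mathbf{x}^{m-1} > \mathbf{0}$ and $\mathbb{R}^{n}_{++}$ is open, there is $t_{0} > 0$ with $\mathcal{A}\mathbf{x}_{t_{0}}^{m-1} > \mathbf{0}$; together with $\mathbf{x}_{t_{0}} > \mathbf{0}$ this shows $\mathcal{A}$ is an S-tensor.

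There is essentially no hard step: the only points that need care are recording that $\mathbf{x} \neq \mathbf{0}$ so that the perturbation starts from a legitimate vector, and observing explicitly that continuity of the polynomial evaluation map is exactly what upgrades ``$\mathbf{x} \geq \mathbf{0}$'' to ``$\mathbf{x} > \mathbf{0}$''. An equivalent formulation would invoke openness of $\{\mathbf{y} : \mathcal{A}\mathbf{y}^{m-1} > \mathbf{0}\}$ together with the fact that every $\mathbf{x} \in \mathbb{R}^{n}_{+}$ is a limit of points of $\mathbb{R}^{n}_{++}$, but the explicit choice $\mathbf{x}_{t} = \mathbf{x} + t\mathbf{e}$ is the cleanest; this is the natural tensor analogue of the corresponding elementary fact for $\mathbf{S}$-matrices.
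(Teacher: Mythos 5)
Your proof is correct, and the perturbation-by-continuity argument (replacing $\mathbf{x}$ by $\mathbf{x}+t\mathbf{e}$ and using openness of the strict inequality) is exactly the standard route; the paper itself does not prove this statement but cites Proposition 3.1 of Song and Yu, whose proof proceeds in essentially the same way.
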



\section{Semi positive tensors}\label{sec:Semi positive tensors}

Song and Qi \cite{sy} extended the concepts of (strictly) semimonotone matrices to (strictly) semi positive tensors. In this section, we revisit the class of (strictly) semi positive tensors, denoted as ($\mathbf{E}$) $\mathbf{E_0}$-tensors and present some results associated with these tensors.

We initiate this section with certain propositions derived directly from the definition of (strict) semi positive tensors. The following two propositions play a fundamental role in characterizing the (strict) semi positivity of tensors. 
\begin{proposition}
A tensor $\mathcal{A} \in \mathbb{R}_{m, n}$ is considered $\mathbf{E_0}$-tensor if and only if the following conditions hold:
\begin{itemize}
\item [(a)] Each proper principal subtensor of $\mathcal{A}$ exhibits semi positivity.
\item [(b)] For any $\mathbf{x}$ where each component is greater than zero, $(\mathcal{A}\mathbf{x}^{m-1})_k \geq 0$ holds for at least one $k$.
\end{itemize}
\end{proposition}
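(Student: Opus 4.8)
The plan is to prove the two implications separately, working straight from the definition of a semi positive ($\mathbf{E_0}$) tensor. The one technical fact underlying both directions is a restriction identity: if $\mathbf{x} \in \mathbb{R}^n_+$ and $J := \{i \in I_n : x_i > 0\}$ denotes its support, then for every $k \in J$,
\[
(\mathcal{A}\mathbf{x}^{m-1})_k = \bigl(\mathcal{A}_r^J (\mathbf{x}_J)^{m-1}\bigr)_k, \qquad r := |J|,
\]
because every monomial $a_{k i_2 \ldots i_m} x_{i_2} \cdots x_{i_m}$ in which some index $i_j$ lies outside $J$ vanishes. This lets us pass between the action of $\mathcal{A}$ on a nonnegative vector and the action of the principal subtensor supported on the positive coordinates, and back again.

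For the forward direction, assume $\mathcal{A}$ is an $\mathbf{E_0}$-tensor. Condition (b) is immediate, since any $\mathbf{x} > \mathbf{0}$ is in particular a nonzero nonnegative vector, so by definition there is an index $k$ with $x_k > 0$ and $(\mathcal{A}\mathbf{x}^{m-1})_k \geq 0$. For condition (a), fix a proper subset $J \subsetneq I_n$ with $r = |J|$ and an arbitrary nonzero $\mathbf{y} \in \mathbb{R}^r_+$; form $\mathbf{x} \in \mathbb{R}^n$ by placing $\mathbf{y}$ on the coordinates indexed by $J$ and zeros elsewhere. Then $\mathbf{0} \neq \mathbf{x} \geq \mathbf{0}$, so semi positivity of $\mathcal{A}$ yields an index $k$ with $x_k > 0$ — which forces $k \in J$ — and $(\mathcal{A}\mathbf{x}^{m-1})_k \geq 0$. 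By the restriction identity this reads $(\mathcal{A}_r^J \mathbf{y}^{m-1})_k \geq 0$ with $y_k = x_k > 0$, so $\mathcal{A}_r^J$ is semi positive.

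For the converse, assume (a) and (b) and take any $\mathbf{0} \neq \mathbf{x} \geq \mathbf{0}$ with support $J$, $r = |J| \geq 1$. If $J = I_n$, i.e. $\mathbf{x} > \mathbf{0}$, then (b) supplies an index $k$ with $(\mathcal{A}\mathbf{x}^{m-1})_k \geq 0$, and $x_k > 0$ holds automatically. Otherwise $J$ is a nonempty proper subset, so $\mathcal{A}_r^J$ is semi positive by (a); applying this to the nonzero vector $\mathbf{x}_J > \mathbf{0}$ gives $k \in J$ with $(\mathbf{x}_J)_k > 0$ and $(\mathcal{A}_r^J (\mathbf{x}_J)^{m-1})_k \geq 0$, which the restriction identity turns back into $x_k > 0$ and $(\mathcal{A}\mathbf{x}^{m-1})_k \geq 0$. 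In every case the semi positivity condition for $\mathcal{A}$ is met, completing the proof. The argument is largely bookkeeping; the only point needing care is the restriction identity together with the observation that, in the forward direction, the index returned by the definition necessarily lies in the support of the test vector — this is exactly what makes (a) and (b) jointly equivalent to semi positivity on the full vector.
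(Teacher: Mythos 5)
Your proof is correct and is essentially the direct-from-the-definition argument the paper has in mind (the paper states this proposition without proof, calling it derived directly from the definition of semi positivity); the restriction identity you isolate is exactly the right bookkeeping device. One tiny remark: in the forward direction you apply the identity with $J$ being the index set of the principal subtensor rather than the exact support of $\mathbf{x}$, which is fine since the identity holds for any $J$ containing the support — your own justification (monomials with an index outside $J$ vanish) already covers this.
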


In the case of $\mathbf{E}$-tensors, we have the following result:

\begin{proposition}
A tensor  $\mathcal{A} \in \mathbb{R}_{m, n}$ is strictly semi positive if and only if
\begin{itemize}
    \item [(a)] Every proper principal subtensor of $\mathcal{A}$ is $\mathbf{E}$-tensor, and
    \item [(b)] For any $\mathbf{x}$ where each component is greater than zero, $(\mathcal{A}\mathbf{x}^{m-1})_k > 0$ holds for at least one $k$.
\end{itemize}
\end{proposition}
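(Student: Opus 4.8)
The plan is to mirror the proof of Proposition 3.1 (the $\mathbf{E_0}$ characterization) and adapt it to the strict setting, since the two statements are structurally identical with "$\geq$" replaced by "$>$". I would argue both implications directly from the definition of a strictly semi positive ($\mathbf{E}$) tensor.

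First, for the forward direction, assume $\mathcal{A}$ is strictly semi positive. For (a), let $J \subsetneq I_n$ be any proper index subset and consider the principal subtensor $\mathcal{A}^J_{|J|}$. Take any $\mathbf{0} \neq \mathbf{y} \geq \mathbf{0}$ in $\mathbb{R}^{|J|}$ and extend it to $\mathbf{x} \in \mathbb{R}^n$ by setting $x_i = y_i$ for $i \in J$ and $x_i = 0$ for $i \notin J$; then $\mathbf{0} \neq \mathbf{x} \geq \mathbf{0}$. Because $x_i = 0$ off $J$, the sum defining $(\mathcal{A}\mathbf{x}^{m-1})_k$ for $k \in J$ only involves indices in $J$, so $(\mathcal{A}\mathbf{x}^{m-1})_k = (\mathcal{A}^J_{|J|}\mathbf{y}^{m-1})_k$. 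Strict semi positivity of $\mathcal{A}$ gives an index $k$ with $x_k > 0$ and $(\mathcal{A}\mathbf{x}^{m-1})_k > 0$; since $x_k>0$ forces $k \in J$, this $k$ witnesses strict semi positivity for $\mathcal{A}^J_{|J|}$. Part (b) is immediate: if every component of $\mathbf{x}$ is positive, then $\mathbf{0}\neq\mathbf{x}\geq\mathbf{0}$, so the defining property yields some $k$ with $(\mathcal{A}\mathbf{x}^{m-1})_k > 0$ (the condition $x_k>0$ being automatic).

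For the converse, assume (a) and (b) and let $\mathbf{0} \neq \mathbf{x} \geq \mathbf{0}$ be arbitrary. If $\mathbf{x} > \mathbf{0}$ (all components positive), then (b) directly produces the required index $k$. Otherwise let $J = \{\, i \in I_n : x_i > 0 \,\}$, which is a nonempty proper subset of $I_n$. Then $\mathbf{x}_J > \mathbf{0}$ in $\mathbb{R}^{|J|}$ and is nonzero, and by (a) the proper principal subtensor $\mathcal{A}^J_{|J|}$ is an $\mathbf{E}$-tensor, so there is $k \in J$ with $(\mathbf{x}_J)_k > 0$ and $(\mathcal{A}^J_{|J|}\mathbf{x}_J^{m-1})_k > 0$. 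Using the same observation that the zero components of $\mathbf{x}$ do not contribute to $(\mathcal{A}\mathbf{x}^{m-1})_k$ for $k \in J$, we get $(\mathcal{A}\mathbf{x}^{m-1})_k = (\mathcal{A}^J_{|J|}\mathbf{x}_J^{m-1})_k > 0$ with $x_k > 0$, as required.

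The only genuinely delicate point — and the one I would state carefully — is the restriction identity $(\mathcal{A}\mathbf{x}^{m-1})_k = (\mathcal{A}^J_{|J|}\mathbf{x}_J^{m-1})_k$ for $k \in J$ when $\mathbf{x}$ vanishes outside $J$; everything else is bookkeeping. This is not really an obstacle, just a routine unwinding of the definition of $\mathcal{A}\mathbf{x}^{m-1}$: in the sum $\sum_{i_2,\ldots,i_m} a_{k i_2 \ldots i_m} x_{i_2}\cdots x_{i_m}$ every term with some $i_j \notin J$ contains a factor $x_{i_j} = 0$. Since the $\mathbf{E_0}$-version (Proposition 3.1) presumably uses exactly this device, the write-up can be kept brief and parallel to it.
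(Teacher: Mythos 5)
Your proof is correct and is exactly the direct-from-definition argument the paper has in mind (the paper states this proposition without proof, remarking only that it is ``derived directly from the definition''); the key restriction identity $(\mathcal{A}\mathbf{x}^{m-1})_k = (\mathcal{A}^J_{|J|}\mathbf{x}_J^{m-1})_k$ for $\mathbf{x}$ vanishing off $J$ is the same device the paper itself uses elsewhere, e.g.\ in the proof of its theorem on nonnegative row-subtensors. Nothing further is needed.
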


The subsequent result shows that the set of (positive) nonnegative tensors constitutes a subset of ($\mathbf{E}$) $\mathbf{E_0}$-tensors.
\begin{proposition}
Let $\mathcal{A} \in \mathbb{R}_{m, n}$ be a tensor that is (positive) nonnegative. In that case, $\mathcal{A}$ is also a member of the set of tensors that are ($\mathbf{E}$) $\mathbf{E_0}$-tensors.
\end{proposition}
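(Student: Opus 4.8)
The plan is to check the two cases directly against the definition of $\mathbf{E_0}$- and $\mathbf{E}$-tensors, using only the elementary fact that a product of nonnegative reals is nonnegative and a product of positive reals is positive. No auxiliary machinery is needed: everything reduces to inspecting the summands of $\left(\mathcal{A}\mathbf{x}^{m-1}\right)_i=\sum_{i_2,\ldots,i_m=1}^{n} a_{i i_2 \ldots i_m}\, x_{i_2}\cdots x_{i_m}$.

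\emph{Nonnegative case.} Suppose $\mathcal{A}=(a_{i_1\ldots i_m})$ is nonnegative, i.e. $a_{i_1\ldots i_m}\ge 0$ for all index tuples. Fix any $\mathbf{0}\neq\mathbf{x}\geq\mathbf{0}$. For every $i\in I_n$, each term $a_{i i_2\ldots i_m}\,x_{i_2}\cdots x_{i_m}$ is a product of nonnegative numbers, so $\left(\mathcal{A}\mathbf{x}^{m-1}\right)_i\ge 0$. Since $\mathbf{x}\neq\mathbf{0}$, there is an index $k\in I_n$ with $x_k>0$, and for this same $k$ we have $\left(\mathcal{A}\mathbf{x}^{m-1}\right)_k\ge 0$; this is exactly the condition for $\mathcal{A}$ to be an $\mathbf{E_0}$-tensor.

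\emph{Positive case.} Suppose $\mathcal{A}$ is positive, i.e. $a_{i_1\ldots i_m}>0$ for all index tuples. Fix any $\mathbf{0}\neq\mathbf{x}\geq\mathbf{0}$ and choose $k\in I_n$ with $x_k>0$. In the expansion $\left(\mathcal{A}\mathbf{x}^{m-1}\right)_k=\sum_{i_2,\ldots,i_m=1}^{n} a_{k i_2\ldots i_m}\,x_{i_2}\cdots x_{i_m}$ every summand is $\ge 0$, while the single summand with $i_2=\cdots=i_m=k$ equals $a_{kk\ldots k}\,x_k^{m-1}>0$. Hence $\left(\mathcal{A}\mathbf{x}^{m-1}\right)_k>0$, so $\mathcal{A}$ is an $\mathbf{E}$-tensor.

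There is no genuine obstacle here; the only point that deserves a moment's care is the order of the quantifiers in the strict case — one must first pin down an index $k$ at which $x_k>0$ and only then invoke positivity of the ``diagonal'' term $a_{kk\ldots k}x_k^{m-1}$ together with nonnegativity of the remaining terms. (Alternatively, the $\mathbf{E}$-tensor claim also follows by combining the characterization of strictly semi positive tensors given above with the observation that every principal subtensor of a positive tensor is again positive, but the direct computation is the shortest route.)
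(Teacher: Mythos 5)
Your proof is correct and is precisely the direct verification that the paper leaves implicit (the paper's own ``proof'' consists only of the remark that the claim ``can be readily verified'' from the definition). Your handling of the strict case --- fixing $k$ with $x_k>0$ first and then isolating the positive diagonal summand $a_{kk\ldots k}x_k^{m-1}$ --- is exactly the right way to fill in that omitted detail.
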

\begin{proof}
The proposition can be readily verified using the (strict) semi positivity of tensor.
\end{proof}

Let us discuss the {\it Hadamard product} (see \cite{hankel,sun}) of two semi positive tensors. Let $\mathcal{A} = (a_{i_1 \ldots i_m})$ and $\mathcal{B} = (b_{i_1 \ldots i_m})$ in $\mathbb{R}_{m, n}$. The Hadamard product of $\mathcal{A}$ and $\mathcal{B}$, denoted $\mathcal{A} \circ \mathcal{B}$, is defined as $\mathcal{A} \circ \mathcal{B} = (a_{i_1 \ldots i_m}b_{i_1 \ldots i_m})$ in $\mathbb{R}_{m, n}$.

We present the subsequent example to illustrate that the Hadamard product of two semi positive tensors may not necessarily yield a semi positive tensor.
\begin{example}
Let $\mathcal{A}=\left(a_{i_1 i_2 i_3}\right) \in \mathbb{R}_{3,2}$ be defined such that $a_{111}=1$, $a_{222}=1$,  $a_{211}=-1$,  $a_{122}=-1$, and all remaining elements $a_{i_1 i_2 i_3}=0$. Similarly, let $\mathcal{B}=\left(b_{i_1 i_2 i_3}\right) \in T_{3,2}$ be given by $b_{111}=1$, $b_{122}= 2$, $b_{211}=-2$, $b_{222}=1$, with all remaining $b_{i_1 i_2 i_3}=0$. Subsequently, for any real vector $\mathbf{x} = (x_1, x_2)^{T} \in \mathbb{R}^2$,
\[
(\mathcal{A} o \mathcal{B})\mathbf{x}^2 = \begin{pmatrix}
x_1^2 -  2x_2^2 \\
-2x_1^2 + x_2^2
\end{pmatrix}.
\]  
Verifying that both tensors $\mathcal{A}$ and $\mathcal{B}$ are semi positive is straightforward. However, the Hadamard product $\mathcal{A} o \mathcal{B}$ does not possess the property of semi-positivity. This can be demonstrated by considering $\mathbf{y} = (1, 1)^{T} > 0$, which leads to $(\mathcal{A} o \mathcal{B})\mathbf{y}^2 < 0$.
\end{example}

Now, we use the Hadamard product to present the relation between the class of ($\mathbf{E}$) $\mathbf{E_0}$-tensors and a subclass of the class of (positive) nonnegative tensors.
\begin{proposition}
If tensors $\mathcal{A}$ and $\mathcal{D}$ belong to $\mathbb{R}_{m, n}$ and $\mathcal{A}$ is (strictly) semi positive, while the diagonal tensor $\mathcal{D}$ possesses (positive) nonnegative diagonal elements, then the Hadamard product $\mathcal{A} \circ \mathcal{D}$ is also a ($\mathbf{E}$) $\mathbf{E_0}$-tensor.

\end{proposition}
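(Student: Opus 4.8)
The plan is to exploit the fact that the Hadamard product of $\mathcal{A}$ with a \emph{diagonal} tensor is again a diagonal tensor, which reduces the claim to a statement about the diagonal entries of a (strictly) semi positive tensor.

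The first step is the elementary observation that a semi positive tensor $\mathcal{A}=(a_{i_1\ldots i_m})$ satisfies $a_{ii\ldots i}\geq 0$ for every $i\in I_n$, and a strictly semi positive tensor satisfies $a_{ii\ldots i}>0$ for every $i\in I_n$. Indeed, applying the definition to the $i$-th standard basis vector $\mathbf{e}_i$ (which is nonzero, nonnegative, and has its only positive coordinate in position $i$), the index $k$ furnished by semi positivity must equal $i$; since $(\mathcal{A}\mathbf{e}_i^{m-1})_i=a_{ii\ldots i}$, we get $a_{ii\ldots i}\geq 0$ (resp.\ $>0$).

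The second step is to unwind the Hadamard product. Because $\mathcal{D}=(d_{i_1\ldots i_m})$ is diagonal, $d_{i_1\ldots i_m}=0$ whenever the indices $i_1,\ldots,i_m$ are not all equal, so $\mathcal{E}:=\mathcal{A}\circ\mathcal{D}$ is again diagonal, with $e_{ii\ldots i}=a_{ii\ldots i}d_{ii\ldots i}$ and all off-diagonal entries zero. Combining with the first step, in the nonstrict case $e_{ii\ldots i}\geq 0$ for all $i$ (a product of two nonnegatives), and in the strict case $e_{ii\ldots i}>0$ for all $i$ (a product of two positives). Finally, for a diagonal tensor one has $(\mathcal{E}\mathbf{x}^{m-1})_k=e_{kk\ldots k}\,x_k^{m-1}$, so given any $\mathbf{0}\neq\mathbf{x}\geq\mathbf{0}$ and any index $k$ with $x_k>0$, we obtain $(\mathcal{E}\mathbf{x}^{m-1})_k=e_{kk\ldots k}x_k^{m-1}\geq 0$ (resp.\ $>0$), which is precisely the defining condition for $\mathcal{E}$ to be an $\mathbf{E_0}$-tensor (resp.\ an $\mathbf{E}$-tensor). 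Alternatively, in the nonstrict case $\mathcal{E}$ is a nonnegative tensor and one may simply invoke the earlier proposition that nonnegative tensors are $\mathbf{E_0}$-tensors.

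There is essentially no obstacle here; the only point worth flagging is that in the strict case $\mathcal{E}=\mathcal{A}\circ\mathcal{D}$ is in general \emph{not} a positive tensor (its off-diagonal entries vanish), so strict semi positivity must be verified directly from a positive coordinate of $\mathbf{x}$ rather than by quoting ``positive tensors are $\mathbf{E}$-tensors.'' Everything else is a direct unwinding of the definitions.
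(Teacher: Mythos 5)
Your proof is correct, and it follows the same route the paper intends: the paper's own ``proof'' is a one-line assertion that the result follows directly from the definitions of the Hadamard product and of ($\mathbf{E}$) $\mathbf{E_0}$-tensors, and your argument is exactly the natural unwinding of those definitions (diagonal entries of a (strictly) semi positive tensor are nonnegative resp.\ positive via $\mathbf{e}_i$, the Hadamard product with a diagonal tensor is diagonal, and a diagonal tensor with nonnegative resp.\ positive diagonal is $\mathbf{E_0}$ resp.\ $\mathbf{E}$). Your closing caveat --- that in the strict case $\mathcal{A}\circ\mathcal{D}$ is not a positive tensor, so one must verify the condition at a positive coordinate directly rather than citing ``positive tensors are $\mathbf{E}$-tensors'' --- is a worthwhile detail the paper glosses over.
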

\begin{proof}
With the definitions of the Hadamard product and the concept of ($\mathbf{E}$) $\mathbf{E_0}$-tensors, We can easily deduce the result.
\end{proof}

In the following result, we establish the relation between the $\mathbf{E_0}$-tensor and row-subtensor of the tensor. First, we recall the concept of a row-subtensor of a tensor.

\begin{definition}\cite{Jiayu}
 The $i$th row of a tensor $\mathcal{A} = (a_{i_1 \ldots i_m})$ of order $m$ and dimension $n$, denoted by $R_i(\mathcal{A})$, is the row-subtensor of order $m - 1$ and dimension $n$ of $\mathcal{A}$ with the entries
$$(R_i(\mathcal{A}))_{i_2 \ldots i_m} = a_{i i_2 \ldots i_m}.$$
\end{definition}

\begin{theorem}
Suppose that every proper principal subtensor of a tensor $\mathcal{A} \in \mathbb{R}_{m, n}$ is (strictly) semi positive, and if $\mathcal{A}$ contains a (positive) nonnegative row-subtensor, then tensor $\mathcal{A}$ is ($\mathbf{E}$) $\mathbf{E_0}$-tensor.
\end{theorem}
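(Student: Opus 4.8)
The plan is to invoke Proposition~3.1 (the characterization of $\mathbf{E_0}$-tensors): a tensor $\mathcal{A}\in\mathbb{R}_{m,n}$ is $\mathbf{E_0}$ if and only if every proper principal subtensor is semi positive \emph{and} for every $\mathbf{x}>\mathbf{0}$ there is an index $k$ with $(\mathcal{A}\mathbf{x}^{m-1})_k\ge 0$ (and the analogous statement from the $\mathbf{E}$-tensor proposition for the strict case). Since the hypothesis already gives us condition (a)—every proper principal subtensor of $\mathcal{A}$ is (strictly) semi positive—the only thing left to establish is condition (b). So the argument reduces to: \emph{assuming $\mathcal{A}$ has a (positive) nonnegative row-subtensor, show that for every $\mathbf{x}>\mathbf{0}$ some coordinate of $\mathcal{A}\mathbf{x}^{m-1}$ is nonnegative (positive).}

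First I would fix the index $i$ for which the row-subtensor $R_i(\mathcal{A})$ is nonnegative, i.e. $a_{i i_2\ldots i_m}\ge 0$ for all $i_2,\ldots,i_m\in I_n$. Then I would take an arbitrary $\mathbf{x}>\mathbf{0}$ and simply compute
\[
(\mathcal{A}\mathbf{x}^{m-1})_i=\sum_{i_2,\ldots,i_m=1}^{n} a_{i i_2\ldots i_m}\,x_{i_2}\cdots x_{i_m}.
\]
Every term in this sum is a product of a nonnegative coefficient $a_{i i_2\ldots i_m}$ with strictly positive factors $x_{i_2}\cdots x_{i_m}$, hence each term is $\ge 0$, so the whole sum is $\ge 0$. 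Thus $k=i$ witnesses condition (b) of Proposition~3.1, and $\mathcal{A}$ is an $\mathbf{E_0}$-tensor. For the strict case, if $R_i(\mathcal{A})$ is a \emph{positive} row-subtensor then every $a_{i i_2\ldots i_m}>0$, so every term above is strictly positive and the sum is $>0$; combined with the hypothesis that all proper principal subtensors are $\mathbf{E}$-tensors, the strict version of the characterizing proposition yields that $\mathcal{A}$ is an $\mathbf{E}$-tensor.

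There is essentially no obstacle here: the statement is a direct packaging of the row-sum positivity into the two characterization propositions already proved, and the only care needed is to notice that the same index $i$ serves as the required $k$ for \emph{every} positive vector $\mathbf{x}$, so condition (b) holds uniformly. I would also remark that in the nonnegative (non-strict) case the coordinate $i$ works even for boundary vectors $\mathbf{0}\neq\mathbf{x}\ge\mathbf{0}$ as long as $x_i>0$, which gives a slightly stronger intermediate observation, but for the theorem as stated the clean appeal to Proposition~3.1 suffices.
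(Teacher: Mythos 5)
Your proposal is correct and follows essentially the same route as the paper: the nonnegative (positive) row-subtensor handles all strictly positive vectors, while the hypothesis on proper principal subtensors handles nonnegative vectors with a zero component. The only cosmetic difference is that you delegate the boundary case to Proposition~3.1 (resp.\ 3.2), whereas the paper writes out that case analysis explicitly by restricting to the principal subtensor indexed by the support of $\mathbf{x}$; the substance is identical.
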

\begin{proof}
Let $R_{i}(\mathcal{A})$ denote the $i$-th row subtensor of tensor $\mathcal{A} \in \mathbb{R}_{m, n}$, characterized by nonnegative components. Consider any $\mathbf{0 \neq x \geq 0}$, where $\mathbf{x}$ may have either all positive entries or at least one zero entry. In the case of $\mathbf{x > 0}$, implying $x_i > 0$, we observe that
\[
(\mathcal{A}\mathbf{x}^{m-1})_i = \sum_{i_{2}, \ldots, i_{m}=1}^{n} (R_i(\mathcal{A}))_{i_2 \ldots i_m} x_{i_{2}} \ldots x_{i_{m}} = \sum_{i_{2}, \ldots, i_{m}=1}^{n} a_{i i_2 \ldots i_m} x_{i_{2}} \ldots x_{i_{m}} \geq 0
\]
$(\text{resp. } (\mathcal{A}\mathbf{x}^{m-1})_i > 0).$ 

Now, consider $\mathbf{0 \neq x \geq 0}$, where $\mathbf{x}$ has at least one entry equal to zero. Let us assume, without loss of generality, that for $j \in I_n$, ${x}_{j}=0$. This implies 
\[
\mathbf{0 \neq x} = (x_1, \ldots, x_{j-1}, 0, x_{j+1}, \ldots, x_n) \geq \mathbf{0}.
\]
Consequently, there is a $k \in I = I_n \backslash \{j\}$ with ${x}_k > 0$, where $|I| = r$ $(1 \leq r < n)$. It follows that 
\[
(\mathcal{A}\mathbf{x}^{m-1})_k = (\mathcal{A}_r^{I}(\mathbf{x}_I)^{m-1})_k \geq 0 \quad (\text{resp. } (\mathcal{A}\mathbf{x}^{m-1})_k > 0)
\]
since the proper principal subtensor $\mathcal{A}_r^I$ is ($\mathbf{E}$) $\mathbf{E_0}$-tensor. This proves the result.
\end{proof}

The next result shows the connection between semi positive tensors and diagonally dominant tensors. First, let us revisit the definition of (strictly) diagonally dominant tensors.

\begin{definition}
\cite{tensor} A tensor $\mathcal{A}$ is called diagonally dominant if
\[
\left|a_{i i \ldots i}\right| \geq \sum_{\substack{(i_2, \ldots, i_m) \\ \neq(i, \ldots, i)}} \left|a_{i i_2 \ldots i_m}\right| \quad \text{for all } i = 1, 2, \ldots, n,
\]

and called strictly diagonally dominant if
\[
\left|a_{i i \ldots i}\right| > \sum_{\substack{(i_2, \ldots, i_m) \\ \neq(i, \ldots, i)}} \left|a_{i i_2 \ldots i_m}\right| \quad \text{for all } i = 1, 2, \ldots, n.
\]
\end{definition}

\begin{theorem}\label{dd}
If all diagonal entries of a real tensor $\mathcal{A}$ in $\mathbb{R}_{m, n}$ are nonnegative and $\mathcal{A}$ is diagonally dominant, then $\mathcal{A}$ is $\mathbf{E_0}$-tensor.
\end{theorem}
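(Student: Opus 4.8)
The plan is to verify the defining property of an $\mathbf{E_0}$-tensor directly: given an arbitrary $\mathbf{0 \neq x \geq 0}$, I must exhibit an index $k$ with $x_k > 0$ and $(\mathcal{A}\mathbf{x}^{m-1})_k \geq 0$. The natural candidate is the index at which $\mathbf{x}$ attains its maximum component. First I would set $x_k := \max_{i \in I_n} x_i$; since $\mathbf{x} \geq \mathbf{0}$ and $\mathbf{x} \neq \mathbf{0}$, this maximum is strictly positive, so $x_k > 0$ as required.

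Next I would split the $k$-th component of $\mathcal{A}\mathbf{x}^{m-1}$ into its diagonal contribution and the rest:
\[
(\mathcal{A}\mathbf{x}^{m-1})_k = a_{kk\ldots k}\, x_k^{m-1} + \sum_{\substack{(i_2, \ldots, i_m) \\ \neq (k, \ldots, k)}} a_{k i_2 \ldots i_m}\, x_{i_2}\cdots x_{i_m}.
\]
The first term is nonnegative because $a_{kk\ldots k} \geq 0$ by hypothesis and $x_k > 0$. For the second (error) term, the key estimate is that for every multi-index $(i_2,\ldots,i_m)$ one has $0 \leq x_{i_2}\cdots x_{i_m} \leq x_k^{m-1}$, since all components of $\mathbf{x}$ are nonnegative and each is bounded above by $x_k$. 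Combining this with the triangle inequality and the diagonal dominance of $\mathcal{A}$ at row $k$ gives
\[
\left| \sum_{\substack{(i_2, \ldots, i_m) \\ \neq (k, \ldots, k)}} a_{k i_2 \ldots i_m}\, x_{i_2}\cdots x_{i_m} \right|
\leq \left( \sum_{\substack{(i_2, \ldots, i_m) \\ \neq (k, \ldots, k)}} |a_{k i_2 \ldots i_m}| \right) x_k^{m-1}
\leq |a_{kk\ldots k}|\, x_k^{m-1} = a_{kk\ldots k}\, x_k^{m-1}.
\]
Therefore $(\mathcal{A}\mathbf{x}^{m-1})_k \geq a_{kk\ldots k} x_k^{m-1} - a_{kk\ldots k} x_k^{m-1} = 0$, which completes the verification.

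I do not anticipate a genuine obstacle here; the argument is a direct tensor analogue of the classical fact that a diagonally dominant matrix with nonnegative diagonal is an $\mathbf{E_0}$-matrix. The only points requiring a little care are: (i) justifying that the maximizing index $k$ has $x_k > 0$ (uses $\mathbf{x} \neq \mathbf{0}$, $\mathbf{x} \geq \mathbf{0}$), and (ii) the monotonicity bound $x_{i_2}\cdots x_{i_m} \leq x_k^{m-1}$, which genuinely needs nonnegativity of all components — this is why the restriction to $\mathbf{x} \geq \mathbf{0}$ (rather than arbitrary $\mathbf{x}$) is essential and also why only the $\mathbf{E_0}$ (semi positive), not the $\mathbf{E}$ (strictly semi positive), conclusion can be drawn without a strict diagonal dominance hypothesis.
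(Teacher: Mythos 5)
Your proposal is correct and follows essentially the same argument as the paper: choose $k$ to maximize $x_k$ (so $x_k > 0$), split $(\mathcal{A}\mathbf{x}^{m-1})_k$ into the diagonal term plus the rest, bound each off-diagonal product by $|a_{k i_2 \ldots i_m}|\, x_k^{m-1}$ using nonnegativity of $\mathbf{x}$, and invoke diagonal dominance. The only cosmetic difference is that you package the estimate via the triangle inequality on the whole off-diagonal sum, while the paper bounds it termwise in a single chain of inequalities.
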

\begin{proof}
Suppose that all diagonal entries of a real tensor $\mathcal{A}$ in $\mathbb{R}_{m, n}$ are nonnegative, and $\mathcal{A}$ is diagonally dominant. Then, for any nonzero and nonnegative vector $\mathbf{x} \in \mathbb{R}^n$, we select ${x}_k \geq {x}_j$ for every $j \in I_n$, where ${x}_k > 0$. By the definition of diagonal dominance, we have
\begin{align*}
(\mathcal{A} \mathbf{x}^{m-1})_k & = a_{k \ldots k} x_k^{m-1} + \sum_{\substack{i_2, \ldots, i_m=1, \\ \left(i_2, \ldots, i_m\right) \neq(k, \ldots, k)}}^n a_{k i_2 \ldots i_m} x_{i_2} \ldots x_{i_m} \\
& \geq \left(a_{k \ldots k} - \sum_{\substack{i_2, \ldots, i_m=1, \\ \left(i_2, \ldots, i_m\right) \neq(k, \ldots, k)}}^n\left|a_{k i_2 \ldots i_m}\right|\right)x_k^{m-1} \\
& \geq  0.
\end{align*}
Therefore, $\mathcal{A}$ is a $\mathbf{E_0}$-tensor.
\end{proof}

The theorem presented here can be established in a manner similar to Theorem \ref{dd}; therefore, we skip the proof.

\begin{theorem}
If the real tensor $\mathcal{A}$ in $\mathbb{R}_{m, n}$ has all its diagonal elements positive and is strictly diagonally dominant, then tensor $\mathcal{A}$ is $\mathbf{E}$-tensor.
\end{theorem}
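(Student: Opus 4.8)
The plan is to mimic the proof of Theorem \ref{dd} almost verbatim, replacing every non-strict inequality by its strict counterpart and tracking where strictness is genuinely needed. First I would fix an arbitrary nonzero nonnegative vector $\mathbf{x}\in\mathbb{R}^n$ and choose an index $k\in I_n$ with $x_k>0$ and $x_k\ge x_j$ for all $j\in I_n$; such a $k$ exists because $\mathbf{x}$ has a largest component and that component is positive. Then I would expand $(\mathcal{A}\mathbf{x}^{m-1})_k$ exactly as in Theorem \ref{dd}, separating the diagonal term $a_{k\ldots k}x_k^{m-1}$ from the off-diagonal sum.

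Next I would bound the off-diagonal sum from below: since $0\le x_{i_2}\cdots x_{i_m}\le x_k^{m-1}$ for every multi-index $(i_2,\ldots,i_m)$ (using $x_k\ge x_j\ge 0$), each off-diagonal term satisfies $a_{k i_2\ldots i_m}x_{i_2}\cdots x_{i_m}\ge -|a_{k i_2\ldots i_m}|x_k^{m-1}$, so
\[
(\mathcal{A}\mathbf{x}^{m-1})_k \;\ge\; \Bigl(a_{k\ldots k} - \sum_{(i_2,\ldots,i_m)\neq(k,\ldots,k)} |a_{k i_2\ldots i_m}|\Bigr)x_k^{m-1}.
\]
By strict diagonal dominance the parenthesized quantity is strictly positive, and since $x_k>0$ gives $x_k^{m-1}>0$, we conclude $(\mathcal{A}\mathbf{x}^{m-1})_k>0$ with $x_k>0$. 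This is precisely the defining condition for $\mathcal{A}$ to be an $\mathbf{E}$-tensor, so the proof is complete.

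I do not expect any serious obstacle here; the argument is a routine strictification. The one point that deserves a moment's care is the claim $x_k^{m-1}>0$: this needs $x_k>0$, which is guaranteed by our choice of $k$ as the index of a maximal (hence positive, since $\mathbf{x}\neq\mathbf{0}$) component, and it needs $m\ge 2$, which holds for any genuine tensor. Apart from that, one should note that the positivity of the diagonal entries is actually subsumed by strict diagonal dominance together with the conclusion being about the sign of a single coordinate, but stating it explicitly (as the theorem does) does no harm and keeps the parallel with Theorem \ref{dd} transparent. Since the excerpt already says the proof is analogous to that of Theorem \ref{dd} and is to be skipped, the above is exactly the anticipated content.
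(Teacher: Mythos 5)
Your proof is correct and is precisely the strictified version of the argument for Theorem \ref{dd} that the paper intends (the paper omits the proof, stating only that it is analogous), with the same choice of $k$ as the index of a maximal component and the same lower bound on the off-diagonal sum. One caveat on your closing aside: the claim that positivity of the diagonal entries is subsumed by strict diagonal dominance is false, since dominance is stated with $\left|a_{ii\ldots i}\right|$ --- for instance the one-dimensional tensor with single entry $-1$ is strictly diagonally dominant but not an $\mathbf{E}$-tensor --- and indeed your own bound silently uses $a_{k\ldots k}>0$ to replace $\left|a_{k\ldots k}\right|$ by $a_{k\ldots k}$ when concluding that the parenthesized quantity is positive.
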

Next, we shift our focus to the class of $\mathbf{Z}$-tensors, which are tensors with nonpositive off-diagonal elements and play a pivotal role in tensor theory. In the next theorem, we establish a relation between the (strong) $\mathcal{M}$-tensors (see \cite{mtensor,tensor,Mtensor}) and ($\mathbf{E}$) $\mathbf{E_0}$-tensors under the assumption of $\mathbf{Z}$-tensors. First, we recall the following definition:
\begin{definition}
\cite{mtensor} A tensor $\mathcal{A} \in \mathbb{R}_{m, n}$ is a $\mathbf{Z}$-tensor if all of its off-diagonal entries are nonpositive. Thus, a $\mathbf{Z}$-tensor $\mathcal{A}$ has a representation of the form $\mathcal{A} = t\mathcal{I - B}$, where $\mathcal{B}$ is a nonnegative tensor. If, in addition, $t \geq \rho(\mathcal{B})$, then we refer to $\mathcal{A}$ as an $\mathcal{M}$-tensor. Furthermore, an $\mathcal{M}$-tensor as described above is Nonsingular if and only if $t > \rho(\mathcal{B})$ and a nonsingular $\mathcal{M}$-tensor is also referred to as a strong $\mathcal{M}$-tensor. 
\end{definition}

\begin{theorem}
Let $\mathcal{A} \in \mathbb{R}_{m, n}$ represent a $Z$-tensor. Then, tensor $\mathcal{A}$ is ($\mathbf{E}$) $\mathbf{E_0}$-tensor iff it is (strong) $\mathcal{M}$.
\end{theorem}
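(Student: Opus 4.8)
The plan is to prove both directions by reducing to a well-chosen scalar parameter and exploiting the $\mathbf{Z}$-tensor representation $\mathcal{A} = t\mathcal{I} - \mathcal{B}$ with $\mathcal{B} \geq \mathcal{O}$. First I would handle the ``if'' direction. Suppose $\mathcal{A}$ is an $\mathcal{M}$-tensor, so $t \geq \rho(\mathcal{B})$. For any $\mathbf{0 \neq x \geq 0}$, choose an index $k$ with $x_k = \max_j x_j > 0$. Then $(\mathcal{A}\mathbf{x}^{m-1})_k = t\, x_k^{m-1} - (\mathcal{B}\mathbf{x}^{m-1})_k$, and since $\mathcal{B} \geq \mathcal{O}$ with $x_k$ the largest entry, one has $(\mathcal{B}\mathbf{x}^{m-1})_k \leq \rho(\mathcal{B})\, x_k^{m-1}$ (this bound on the $k$-th component for the maximal coordinate is the key elementary estimate, analogous to the Collatz--Wielandt / Perron bound; in the nonsymmetric case I would instead invoke the known fact that an $\mathcal{M}$-tensor has all principal subtensors with nonnegative diagonal and is essentially ``generalized diagonally dominant'' after a positive diagonal scaling, so Theorem \ref{dd} applies). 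Hence $(\mathcal{A}\mathbf{x}^{m-1})_k \geq (t - \rho(\mathcal{B}))\, x_k^{m-1} \geq 0$, giving semi positivity; if moreover $t > \rho(\mathcal{B})$ (strong $\mathcal{M}$) the inequality is strict, giving the $\mathbf{E}$-tensor conclusion.

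Next I would prove the ``only if'' direction by contraposition: assume $\mathcal{A} = t\mathcal{I} - \mathcal{B}$ is a $\mathbf{Z}$-tensor that is \emph{not} a (strong) $\mathcal{M}$-tensor, i.e.\ $t < \rho(\mathcal{B})$ (resp.\ $t \leq \rho(\mathcal{B})$), and show $\mathcal{A}$ fails to be an $\mathbf{E_0}$-tensor (resp.\ $\mathbf{E}$-tensor). The tool here is the Perron--Frobenius theory for nonnegative tensors: $\rho(\mathcal{B})$ is an $H^+$-eigenvalue of $\mathcal{B}$, so there is $\mathbf{0 \neq u \geq 0}$ with $\mathcal{B}\mathbf{u}^{m-1} = \rho(\mathcal{B})\,\mathbf{u}^{[m-1]}$. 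Then for every index $k$ with $u_k > 0$,
\[
(\mathcal{A}\mathbf{u}^{m-1})_k = t\, u_k^{m-1} - \rho(\mathcal{B})\, u_k^{m-1} = (t - \rho(\mathcal{B}))\, u_k^{m-1},
\]
which is $< 0$ when $t < \rho(\mathcal{B})$ and $\leq 0$ (indeed $=0$) when $t = \rho(\mathcal{B})$. In the first case no admissible index works, so $\mathcal{A}$ is not semi positive; in the borderline case $t = \rho(\mathcal{B})$ every admissible index gives value exactly $0$, so $\mathcal{A}$ is not strictly semi positive. This establishes the contrapositive of ``only if'' for both the strict and non-strict versions.

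The main obstacle I anticipate is the nonsymmetric subtlety in the ``if'' direction: for a general (non-symmetric, non-weakly-irreducible) nonnegative tensor $\mathcal{B}$, the naive coordinatewise bound $(\mathcal{B}\mathbf{x}^{m-1})_k \leq \rho(\mathcal{B})\, x_k^{m-1}$ at the maximal coordinate need not hold directly, because $\rho(\mathcal{B})$ is defined via the eigenvalue equation rather than as a row-sum bound. To get around this I would use the characterization that $t\mathcal{I} - \mathcal{B}$ is an $\mathcal{M}$-tensor iff there exists $\mathbf{w} > 0$ with $(t\mathcal{I} - \mathcal{B})\mathbf{w}^{m-1} \geq 0$ (a standard equivalent definition of $\mathcal{M}$-tensors), and then perform the positive diagonal scaling $x_i \mapsto x_i / w_i$ so that the scaled tensor becomes diagonally dominant with nonnegative diagonal; Theorem \ref{dd} (and its strict analogue) then yields the conclusion, since semi positivity is preserved under positive diagonal scaling of the variables. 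I would also need to double-check that ``strong $\mathcal{M}$'' corresponds precisely to the strict inequality $(t\mathcal{I}-\mathcal{B})\mathbf{w}^{m-1} > 0$ for some $\mathbf{w} > 0$, which is where nonsingularity enters.
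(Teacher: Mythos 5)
Your ``only if'' direction coincides with the paper's proof: both take a representation $\mathcal{A}=t\mathcal{I}-\mathcal{B}$ with $t<\rho(\mathcal{B})$ (resp.\ $t\le\rho(\mathcal{B})$), invoke Perron--Frobenius to get $\mathbf{0\neq u\ge 0}$ with $\mathcal{B}\mathbf{u}^{m-1}=\rho(\mathcal{B})\mathbf{u}^{[m-1]}$, and observe that $(\mathcal{A}\mathbf{u}^{m-1})_k=(t-\rho(\mathcal{B}))u_k^{m-1}$ is negative (nonpositive) at every admissible index. That part is fine. For the ``if'' direction the paper does not argue directly at all: it simply cites that a (strong) $\mathcal{M}$-tensor is a ($\mathbf{P}$) $\mathbf{P_0}$-tensor (Theorems 5.26 and 5.32 of Qi--Luo) and hence ($\mathbf{E}$) $\mathbf{E_0}$. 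Your attempt at a self-contained argument is where the gap lies.

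Two of your ingredients fail in the non-strong case. First, as you yourself suspect, the bound $(\mathcal{B}\mathbf{x}^{m-1})_k\le\rho(\mathcal{B})x_k^{m-1}$ at a maximal coordinate is false in general: already for $m=2$ take $B=\bigl(\begin{smallmatrix}0&2\\0&0\end{smallmatrix}\bigr)$, $\rho(B)=0$, $\mathbf{x}=(1,1)^{T}$, where $(B\mathbf{x})_1=2>0$. Second, and more seriously, your fallback rests on the claim that a $\mathbf{Z}$-tensor is an $\mathcal{M}$-tensor iff there exists $\mathbf{w}>\mathbf{0}$ with $\mathcal{A}\mathbf{w}^{m-1}\ge\mathbf{0}$. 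This is a correct characterization only of \emph{strong} (nonsingular) $\mathcal{M}$-tensors, where one can even take the inequality strict; for singular $\mathcal{M}$-tensors it is false. The $Z$-matrix $A=\bigl(\begin{smallmatrix}0&0\\-1&0\end{smallmatrix}\bigr)=0\cdot I-B$ with $B=\bigl(\begin{smallmatrix}0&0\\1&0\end{smallmatrix}\bigr)$, $\rho(B)=0$, is a singular $\mathcal{M}$-matrix, yet $(A\mathbf{w})_2=-w_1<0$ for every $\mathbf{w}>\mathbf{0}$, so no diagonal rescaling can make it diagonally dominant and your route to Theorem \ref{dd} is blocked (note $A$ is nonetheless semimonotone). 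So your argument, as written, proves only the strong $\mathcal{M}\Rightarrow\mathbf{E}$ implication. To repair the $\mathcal{M}\Rightarrow\mathbf{E_0}$ case without citing the $\mathbf{P_0}$ result, you can use that $\mathcal{A}+\varepsilon\mathcal{I}$ is a strong $\mathcal{M}$-tensor for every $\varepsilon>0$, apply your (valid) scaling argument to conclude $\mathcal{A}+\varepsilon\mathcal{I}$ is an $\mathbf{E}$-tensor, and then pass to the limit: for fixed $\mathbf{0\neq x\ge 0}$ only finitely many indices are available, so some $k$ with $x_k>0$ satisfies $(\mathcal{A}\mathbf{x}^{m-1})_k>-\varepsilon_j x_k^{m-1}$ along a sequence $\varepsilon_j\downarrow 0$, whence $(\mathcal{A}\mathbf{x}^{m-1})_k\ge 0$.
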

\begin{proof}
Assume $\mathcal{A}$ to be a $Z$-tensor, failing to qualify as a (strong) $\mathcal{M}$-tensor. Consequently, $\mathcal{A}$ can be formulated as $\mathcal{A} = t\mathcal{I - B}$, where $\mathcal{B}$ is non-negative and $t < \rho(\mathcal{B})$ (resp. $t \leq \rho(\mathcal{B})$). Now, by the Perron-Frobenius Theorem (see \cite{land,yang}), $\rho(\mathcal{B})$ is an eigenvalue of $\mathcal{B}$ with an associated eigenvector $\mathbf{x}$ with $\mathbf{0 \neq x \geq 0}$. Thus, note that for $x_i >0$ where $i \in I_n$, we have
\[
(\mathcal{A} \mathbf{x}^{m-1})_i = (t \mathbf{x}^{m-1} - \mathcal{B} \mathbf{x}^{m-1})_i = (t - \rho(\mathcal{B})) x_i^{m-1} < 0,
\]
$(\text{resp. } (\mathcal{A} \mathbf{x}^{m-1})_i = (t - \rho(\mathcal{B})) x_i^{m-1} \leq 0)$. This implies that the tensor $\mathcal{A}$ is not  ($\mathbf{E}$) $\mathbf{E_0}$-tensor.

Conversely, let us assume $\mathcal{A}$ to be a (strong) $\mathcal{M}$-tensor. From \cite[Theorems 5.26 and 5.32]{tensor}, it follows that tensor $\mathcal{A}$ is ($P$) $P_0$, and hence, tensor $\mathcal{A}$ is ($\mathbf{E}$) $\mathbf{E_0}$-tensor.
\end{proof}

Now, we proceed to discuss a significant result of matrices into tensors in the following remarks.
\begin{remark}
Note that  the class of semimonotone matrices ( $\mathbf{E_0}$ ) and the class of completely $\mathbf{S_{0}}$-matrices are identical, that is, $\mathbf{E_0} =  {\mathbf{\bar{S}_0}}$ \cite{cps}.
This characterization of semimonotone matrices holds significant value in various aspects. However, the above result does not apply to tensors. In this instance, we construct a tensor that qualifies as a $\mathbf{\bar{S}_0}$-tensor but does not meet the criteria for being a semi positive tensor.

Consider $\mathcal{A}=\left(a_{i_1 i_2 i_3}\right) \in \mathbb{R}_{3,2}$, with specified entries $a_{111}=1$, $a_{222}=0$,  $a_{212}=-1$, $a_{112}=-1$, and all remaining entries $a_{i_1 i_2 i_3}=0$. Subsequently, for every real vector $\mathbf{x} = (x_1, x_2)^{T} \in \mathbb{R}^2$

\[
\mathcal{A} \mathbf{x}^2=\begin{pmatrix}
x_1^2-x_1 x_2 \\
-x_1 x_2
\end{pmatrix}.
\]
Here, each proper principal subtensor of tensor $\mathcal{A}$ is $\mathbf{S_{0}}$-tensor. Let $\mathbf{y} = (1, 0)^{T} \neq \mathbf{0}$ be a nonnegative vector. Then, we have $\mathcal{A} \mathbf{y}^2 \geq \mathbf{0}$, implying that tensor $\mathcal{A}$ is a $\mathbf{\bar S_{0}}$. However, for $\mathbf{x} = (1,2)^{T} > 0$, the following holds $\mathcal{A} \mathbf{x}^2 < 0$, and hence the tensor $\mathcal{A}$ is not semi positive.
\end{remark}

\begin{remark}
Note that the class of strictly semimonotone matrices  ($\mathbf E$) and the class of completely $\mathbf{S}$-matrices  are same, i.e.,  $\mathbf E =  \mathbf{ \bar{S}}$ \cite{cps}. In the example below, we find a $\mathbf{\bar{S}}$-tensor that is not semi positive. Consider $\mathcal{A}=\left(a_{i_1 i_2 i_3}\right) \in \mathbb{R}_{3,2}$, where the elements are defined such that $a_{111}=1$, $a_{112}=-2$, $a_{122}=1$, $a_{211}=-1$, $a_{222}=1$, and all the remaining entries are zero. Now, for any vector $\mathbf{x} = (x_1, x_2)^{T} \in \mathbb{R}^2$,
\[
\mathcal{A} \mathbf{x}^2=\begin{pmatrix}
x_1^2-2x_1 x_2+x_2^2 \\
-x_1^2 + x_2^2
\end{pmatrix}.
\]
Here, each proper principal subtensor of tensor $\mathcal{A}$ is $\mathbf{S}$-tensor. Take $\mathbf{y} = (1, 1.1)^{T}$, then $\mathbf{y > 0}$ and $\mathcal{A} \mathbf{y}^2 > \mathbf{0}$, thus, $\mathcal{A}$ is a $\mathbf{\bar S}$-tensor. However, for $\mathbf{y} = (1,1)^{T} > \mathbf{0}$, the following holds  $\mathcal{A} \mathbf{y}^2 = \mathbf{0}$, and hence the tensor $\mathcal{A}$ is not strictly semi positive.
\end{remark}


\section{Almost (strictly) semi positive tensors}\label{sec:Almost (strictly) semi positive tensors}
In this section, we first introduce the concept of the {\it almost (strictly) semi positive tensor}, which is a natural extension of an {\it almost (strictly) semimonotone matrix} (see \cite{bpc,wen}). Such tensors are of crucial importance in deriving criteria for
semi positively of tensors. Subsequently, we present a variety of results related to these tensor classes. Let us begin by stating the definition of a tensor with a special structure.
\begin{definition}\label{d1}
A real tensor $\mathcal{A} \in \mathbb{R}_{m, n}$ is {\it almost semi positive} if every proper principal subtensor of $\mathcal{A}$ is a semi positive tensor and there exists a real vector $\mathbf{x > 0}$ such that $(\mathcal{A}\mathbf{x}^{m-1}) < \mathbf{0}$. We denote such tensor by almost $\mathbf{E_0}$-tensor.
\end{definition}

In the example below, we construct an {\it almost $\mathbf{E_0}$-tensor}.

\begin{example}
Consider a tensor $\mathcal{A}$ defined as $\left(a_{i_1 i_2 i_3}\right) \in \mathbb{R}_{3,2}$, where the elements are specified as follows: $a_{111}=1$,  $a_{212}=-1$, $a_{112}=-1$, $a_{222}=0$, and all the remaining entries are zero. For any vector $\mathbf{x} = (x_1, x_2)^{T} \in \mathbb{R}^2$,
$$
\mathcal{A} \mathbf{x}^2=\left(\begin{array}{l}
x_1^2-x_1 x_2 \\
-x_1 x_2
\end{array}\right).
$$
We observe that all proper principal subtensors of $\mathcal{A}$ exhibit the semi positive property. Additionally, for $\mathbf{x} = (1, 2)^{T} > \mathbf{0}$, the following holds $\mathcal{A} \mathbf{x}^2 < \mathbf{0}$, and hence the tensor $\mathcal{A}$ is almost $\mathbf{E_0}$-tensor.
\end{example}

Next, we state the definition of another tensor with a special structure.
\begin{definition}\label{d2}
A real tensor $\mathcal{A} \in \mathbb{R}_{m, n}$ is said to be {\it almost strictly semi positive} if every proper principal subtensor of tensor $\mathcal{A}$ is strictly semi positive and there exists a real vector $\mathbf{x > 0}$ such that $(\mathcal{A}\mathbf{x}^{m-1}) \leq \mathbf{0}$. We denote such tensor by almost $\mathbf{E}$-tensor.
\end{definition}

In the example below, we construct an {\it almost $\mathbf{E}$-tensor}.

\begin{example}\label{eg1}
Consider a tensor $\mathcal{A}$ represented by $\left(a_{i_1 i_2 i_3}\right) \in \mathbb{R}_{3,2}$, where specific entries are defined such that $a_{111}=1$, $a_{112}=-1$, $a_{212}=-3$, $a_{222}=1$, and all the remaining entries are zero. For any vector $\mathbf{x} = (x_1, x_2)^{T} \in \mathbb{R}^2$,
\[
\mathcal{A} \mathbf{x}^2=\begin{pmatrix}
x_1\left(x_1-x_2\right) \\
x_2\left(x_2-3x_1\right)
\end{pmatrix}.
\]
It is apparent that all proper principal subtensors of $\mathcal{A}$ exhibit strict semi-positivity. Moreover, for the chosen vector $\mathbf{x} = (1,1)^{T} > \mathbf{0},$ the inequality $\mathcal{A} \mathbf{x}^2 \leq \mathbf{0}$ is satisfied. Thus, $\mathcal{A}$ qualifies as an almost $\mathbf{E}$-tensor.
\end{example}

The following remark is useful in studying the {\it almost $\mathbf{E}$-tensors}.

\begin{remark}\label{classification}
The almost $\mathbf{E}$-tensors $\mathcal{A}$ can be classified into two types. It is worth noting that if tensor $\mathcal{A}$ is almost $\mathbf{E}$, then tensor $\mathcal{A}$ is either almost $\mathbf{E_0}$ or $\mathcal{A}$ is both an almost $\mathbf{E}$-tensor and an $\mathbf{E_0}$-tensor. In Example \ref{eg1}, an almost $\mathbf{E}$-tensor $\mathcal{A}$ is almost $\mathbf{E_0}$, as evidenced by the fact that for $\mathbf{x} = (1,2)^{T} > \mathbf{0},$ we observe $\mathcal{A} \mathbf{x}^2 < \mathbf{0}$. Subsequently, we proceed to construct a tensor that possesses the characteristics of both an almost $\mathbf{E}$-tensor and a $\mathbf{E_0}$-tensor.

Consider a tensor $\mathcal{A}=\left(a_{i_1 i_2 i_3}\right) \in \mathbb{R}_{3,2}$, where specific elements are given such that $a_{111}=1$, $a_{222}=1$, $a_{122}=-1$, $a_{211}=-1$, and all the remaining elements are zero. For any real-valued vector $\mathbf{x} = (x_1, x_2)^{T} \in \mathbb{R}^2$, we have following expression:
\[
\mathcal{A} \mathbf{x}^2 = \begin{pmatrix}
x_1^2 - x_2^2 \\
x_2^2 - x_1^2
\end{pmatrix}.
\]
It is evident that every proper principal subtensor of tensor $\mathcal{A}$ is  an $\mathbf{E}$-tensor. Moreover, when considering $\mathbf{x}=(1,1)^{T}$, it is observed that $\mathcal{A} \mathbf{x}^2 = \mathbf{0}$. Therefore, $\mathcal{A}$ can be classified as an almost $\mathbf{E}$-tensor.

Moreover, considering any vector $\mathbf{x} = (x_1, x_2)^{T} \in \mathbb{R}^2$ with $\mathbf{x}$ being non-zero and each component nonnegative, the latter conditions hold: $x_1 < x_2$ or $x_2 \leq x_1$. Whenever $x_1 < x_2$, we have $x_2 > 0$ and $\left(\mathcal{A} \mathbf{x}^2\right)_2=x_2^2-x_1^2>0$, indicating that tensor $\mathcal{A}$ is a $\mathbf{E_0}$-tensor in this situation. On the other hand, if $x_2 \leq x_1$, then we have $x_1>0$ and $\left(\mathcal{A} \mathbf{x}^2\right)_1=x_1^2-x_2^2 \geq 0$, which implies that tensor $\mathcal{A}$ is a $\mathbf{E_0}$-tensor in this scenario and hence tensor $\mathcal{A}$ is a $\mathbf{E_0}$-tensor. 
 \end{remark}
\begin{remark} 
As expected, the sum and Hadamard product (see \cite{hankel}) of two almost $\mathbf{E_0}$-tensors does not necessarily result in an almost $\mathbf{E_0}$-tensor. To justify this statement, let us consider a nontrivial example.

Suppose $\mathcal{A}$ be represented by $\left(a_{i_1 i_2 i_3}\right) \in \mathbb{R}_{3,2}$, where specific elements are $a_{111}=0$, $a_{122}=a_{211}=-1/2$, $a_{222}=1$, and all the remaining entries are $a_{i_1 i_2 i_3}=0$. Additionally, consider $\mathcal{B}=\left(b_{i_1 i_2 i_3}\right) \in \mathbb{R}_{3,2}$ where $b_{111}=1, b_{122}=-1/2, b_{211}=-1/2, b_{222}=0$, and all the remaining entries $b_{i_1 i_2 i_3}=0$. For any vector $\mathbf{x} = (x_1, x_2)^{T} \in \mathbb{R}^2$, we have the following expression:
\[
\mathcal{(A + B)} \mathbf{x}^2 = \begin{pmatrix}
x_1^2 -  x_2^2 \\
-x_1^2 + x_2^2
\end{pmatrix} \quad \text{and} \quad (\mathcal{A}o\mathcal{B}) \mathbf{x}^2 = \frac{1}{4} \begin{pmatrix}
x_2^2 \\
x_1^2
\end{pmatrix}.
\]
Both tensors $\mathcal{A}$ and $\mathcal{B}$ are almost $\mathbf{E_0}$-tensors, but neither $\mathcal{A + B}$ nor $\mathcal{A}o\mathcal{B}$ is an almost $\mathbf{E_0}$-tensor.
\end{remark}
In the sequel, we present some results related to properties of entries of almost ($\mathbf{E}$) $\mathbf{E_0}$-tensors.

\begin{proposition}
Suppose that the tensor $\mathcal{A} = (a_{i_1 i_2 \ldots i_m}) \in \mathbb{R}_{m, n}$.
\begin{itemize}
    \item [(a)] If $\mathcal{A}$ is an almost $\mathbf{E_0}$-tensor, then each entry $a_{i i \ldots i}$ is greater than or equal to zero for all $i \in I_n$.
    \item [(b)] If $\mathcal{A}$ is an almost $\mathbf{E}$-tensor, then each entry $a_{i i \ldots i}$ is greater than zero for all $i \in I_n$.
    \item [(c)] If $\mathcal{A}$ is an almost $\mathbf{E_0}$-tensor, then there is a $k \in I_n$ satisfying \[ a_{k k \ldots k} + \sum_{\substack{0 > a_{k i_2 \ldots i_m} \\ (i_2, \ldots, i_m) \neq (k, \ldots, k)}} a_{k i_2 \ldots i_m} < 0. \]
    \item [(d)] If $\mathcal{A}$ is an almost $\mathbf{E}$-tensor, then there is a $k \in I_n$ satisfying \[ a_{k k \ldots k} + \sum_{\substack{0 > a_{k i_2 \ldots i_m} \\ (i_2, \ldots, i_m) \neq (k, \ldots, k)}} a_{k i_2 \ldots i_m} \leq 0. \]
\end{itemize}
\end{proposition}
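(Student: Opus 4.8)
The plan is to handle the four parts in the natural order, using the diagonal-entry facts (a)--(b) to drive the ``small-off-diagonal-row'' facts (c)--(d). For part (a), I would argue by contradiction: suppose $a_{ii\ldots i} < 0$ for some $i \in I_n$. The key observation is that the $1$-dimensional principal subtensor $\mathcal{A}_1^{\{i\}}$ is precisely the diagonal entry $a_{ii\ldots i}$, and for a scalar tensor of order $m$ the value of $\mathcal{A}_1^{\{i\}} t^{m-1}$ at $t>0$ is $a_{ii\ldots i}\,t^{m-1} < 0$; hence the subtensor $\mathcal{A}_1^{\{i\}}$ is not semi positive, contradicting Definition~\ref{d1}, which requires every proper principal subtensor (in particular the $1$-dimensional ones, as long as $n \geq 2$) to be semi positive. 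Part (b) is identical with ``$<0$'' replaced by ``$\le 0$'' and ``not semi positive'' replaced by ``not strictly semi positive,'' invoking Definition~\ref{d2}.

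For part (c), I would use the defining vector of an almost $\mathbf{E_0}$-tensor: there is $\mathbf{x}>\mathbf{0}$ with $\mathcal{A}\mathbf{x}^{m-1} < \mathbf{0}$, so in particular there is an index $k$ with $(\mathcal{A}\mathbf{x}^{m-1})_k < 0$ (in fact all coordinates are negative). Normalize by scaling $\mathbf{x}$ so that $x_k = \min_j x_j$ if that helps, or more cleanly, note that we may rescale $\mathbf{x}$ so that $\max_j x_j$ is controlled; the cleanest route is: since everything is homogeneous of degree $m-1$, rescale so that $x_k \le x_j$ for all $j$ is \emph{not} automatic, so instead I would scale $\mathbf{x}$ so that $x_j \le 1$ for all $j$ with $x_k = 1$ — wait, that need not be possible simultaneously. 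The correct normalization is to pick $\mathbf{x}>\mathbf 0$ with $\mathcal A\mathbf x^{m-1}<\mathbf 0$ and then \emph{scale it so that} $x_k = \max_j x_j$? That is also not free. Let me restructure: pick any $\mathbf{x} > \mathbf{0}$ with $\mathcal{A}\mathbf{x}^{m-1} < \mathbf{0}$, let $k$ be \emph{any} index (say one achieving $x_k = \max_j x_j$), and write
\[
0 > (\mathcal{A}\mathbf{x}^{m-1})_k = a_{kk\ldots k} x_k^{m-1} + \sum_{\substack{(i_2,\ldots,i_m)\neq(k,\ldots,k)}} a_{k i_2 \ldots i_m} x_{i_2}\cdots x_{i_m}.
\]
In the sum, bound each term below: when $a_{k i_2 \ldots i_m} \ge 0$ the term is $\ge 0$ and may be dropped, and when $a_{k i_2 \ldots i_m} < 0$ the term is $\ge a_{k i_2 \ldots i_m} x_k^{m-1}$ because $0 < x_{i_2}\cdots x_{i_m} \le x_k^{m-1}$ by the maximality of $x_k$. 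Hence
\[
0 > (\mathcal{A}\mathbf{x}^{m-1})_k \ge \Bigl( a_{kk\ldots k} + \sum_{\substack{0 > a_{k i_2 \ldots i_m}\\ (i_2,\ldots,i_m)\neq(k,\ldots,k)}} a_{k i_2 \ldots i_m} \Bigr) x_k^{m-1},
\]
and dividing by $x_k^{m-1} > 0$ gives exactly the claimed inequality. Part (d) is the verbatim analogue: for an almost $\mathbf{E}$-tensor there is $\mathbf{x} > \mathbf{0}$ with $\mathcal{A}\mathbf{x}^{m-1} \le \mathbf{0}$, so picking $k$ with $x_k = \max_j x_j$ we get $(\mathcal{A}\mathbf{x}^{m-1})_k \le 0$, and the same term-by-term estimate yields $a_{kk\ldots k} + \sum_{0 > a_{k i_2 \ldots i_m}} a_{k i_2 \ldots i_m} \le 0$ after dividing by $x_k^{m-1}$.

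The only subtlety — and the main thing to get right — is the normalization in (c) and (d): one must choose the index $k$ to be one at which the coordinate of $\mathbf{x}$ is largest, so that the crude bound $x_{i_2}\cdots x_{i_m} \le x_k^{m-1}$ is valid for all multi-indices; with that choice the negative off-diagonal contributions are each at least $a_{k i_2\ldots i_m} x_k^{m-1}$ and the nonnegative ones can be discarded. Everything else is routine: parts (a) and (b) are immediate from the requirement that $1$-dimensional principal subtensors be (strictly) semi positive, and parts (c) and (d) are a single display of inequalities. I would present (a)--(b) together in one short paragraph and (c)--(d) together in a second, each time noting that the strict and non-strict versions differ only in replacing every strict inequality by its weak counterpart.
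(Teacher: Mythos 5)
Your proof is correct and follows essentially the same route as the paper: parts (a)--(b) via the $1$-dimensional proper principal subtensors (equivalently, the unit vectors), and parts (c)--(d) by choosing $k$ with $x_k=\max_j x_j$, discarding the nonnegative off-diagonal terms, and bounding each negative term below by $a_{k i_2\ldots i_m}x_k^{m-1}$ before dividing by $x_k^{m-1}>0$. The normalization you eventually settled on is exactly the one the paper uses, so aside from tidying up the exploratory digression in the middle of (c), nothing needs to change.
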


\begin{proof}
The proofs for both the results $(a)$ and $(b)$ follow directly from Definitions \ref{d1} and \ref{d2}, by considering the vector $\mathbf{x}^{(i)}$ with a value of $1$ at the $i$th position for $i \in I_n$.

Next, we establish $(c)$. Assume that tensor $\mathcal{A}$ is an almost $\mathbf{E_0}$-tensor. This implies the existence of a positive real-valued vector $\mathbf{x}$ such that $(\mathcal{A} \mathbf{x}^{m-1})_i < 0$ holds for all $i \in I_n$. Select ${x}_j \leq {x}_k$ for each $j \in I_n$. Then, since ${x}_k > 0$, we have
\[
\begin{aligned}
& 0 > \left(\mathcal{A} \mathbf{x}^{m-1}\right)_k = \sum_{i_2, \ldots, i_m=1}^n a_{k i_2 \ldots i_m} {x}_{i_2} \ldots {x}_{i_m} \\
& = a_{k \ldots k} {x}_k^{m-1} + \sum_{\substack{0 < a_{k i_2 \ldots i_m} \\ \left(i_2, \ldots, i_m\right) \neq (k, \ldots, k)}} a_{k i_2 \ldots i_m} {x}_{i_2} \ldots {x}_{i_m} + \sum_{\substack{0 > a_{k i_2 \ldots i_m} \\ \left(i_2, \ldots, i_m\right) \neq (k, \ldots, k)}} a_{k i_2 \ldots i_m} {x}_{i_2} \ldots {x}_{i_m} \\
& \geq a_{k \ldots k} {x}_k^{m-1} + \sum_{\substack{0 > a_{k i_2 \ldots i_m} \\ \left(i_2, \ldots, i_m\right) \neq (k, \ldots, k)}} a_{k i_2 \ldots i_m} {x}_{i_2} \ldots {x}_{i_m} \\
& \geq a_{k \ldots k} {x}_k^{m-1} + \sum_{\substack{0 > a_{k i_2 \ldots i_m}  \\ \left(i_2, \ldots, i_m\right) \neq (k, \ldots, k)}} a_{k i_2 \ldots i_m} {x}_k^{m-1} \\
& = \left(a_{k \ldots k} + \sum_{\substack{0 > a_{k i_2 \ldots i_m} \\ \left(i_2, \ldots, i_m\right) \neq (k, \ldots, k)}} a_{k i_2 \ldots i_m}\right) {x}_k^{m-1}.
\end{aligned}
\]
This validates the assertion of part $(c)$.

To establish $(d)$, we can prove the claim for the almost strictly semi positive tensor by using a similar line of reasoning as in part $(c)$, making necessary adjustments to the inequalities.
\end{proof}
\begin{theorem}\label{al1}
Every row-subtensor of an almost $\mathbf{E_0}$-tensor contains at least one negative entry.
\end{theorem}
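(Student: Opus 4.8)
The claim is equivalent to showing that no row-subtensor of an almost $\mathbf{E_0}$-tensor $\mathcal{A}$ can be entrywise nonnegative, so I will argue by contradiction. Suppose some row-subtensor $R_i(\mathcal{A})$ is nonnegative, i.e.\ $a_{i i_2 \ldots i_m} \geq 0$ for all $(i_2,\ldots,i_m)$. I will show this forces $\mathcal{A}$ to be a genuine $\mathbf{E_0}$-tensor, contradicting the defining property of ``almost'' (which requires $\mathcal{A}$ itself \emph{not} to be semi positive). Since by Definition~\ref{d1} every proper principal subtensor of an almost $\mathbf{E_0}$-tensor is semi positive, the hypothesis of Theorem~4.1 (the earlier ``row-subtensor'' theorem in Section~\ref{sec:Semi positive tensors}, stating that if all proper principal subtensors are $\mathbf{E_0}$ and $\mathcal{A}$ has a nonnegative row-subtensor then $\mathcal{A}$ is $\mathbf{E_0}$) is satisfied verbatim.

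The plan is therefore: (1) invoke Definition~\ref{d1} to record that every proper principal subtensor of $\mathcal{A}$ is semi positive; (2) assume for contradiction that $R_i(\mathcal{A}) \geq \mathbf{0}$ for some $i \in I_n$; (3) apply the earlier theorem to conclude $\mathcal{A}$ is $\mathbf{E_0}$; (4) observe that an almost $\mathbf{E_0}$-tensor, by definition, admits $\mathbf{x} > \mathbf{0}$ with $(\mathcal{A}\mathbf{x}^{m-1}) < \mathbf{0}$, so in particular $(\mathcal{A}\mathbf{x}^{m-1})_k < 0$ for every $k$ with $x_k > 0$; this directly violates semi positivity, a contradiction. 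Hence every row-subtensor has at least one negative entry.

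Alternatively, if one prefers a self-contained argument not citing the earlier theorem, one can reason directly: let $\mathbf{x} > \mathbf{0}$ be the vector from Definition~\ref{d1} with $\mathcal{A}\mathbf{x}^{m-1} < \mathbf{0}$. If $R_i(\mathcal{A}) \geq \mathbf{0}$, then since all $x_{i_2}\cdots x_{i_m} > 0$ we get $(\mathcal{A}\mathbf{x}^{m-1})_i = \sum_{i_2,\ldots,i_m} a_{i i_2 \ldots i_m} x_{i_2}\cdots x_{i_m} \geq 0$, contradicting $(\mathcal{A}\mathbf{x}^{m-1})_i < 0$. This is the cleaner route and I would present it this way, as it uses only the ``defect'' vector guaranteed by the definition and the positivity of its entries.

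There is essentially no real obstacle here; the only point requiring a moment's care is making sure one uses the \emph{strict} inequality $\mathcal{A}\mathbf{x}^{m-1} < \mathbf{0}$ (componentwise) supplied by Definition~\ref{d1} for the $\mathbf{E_0}$ case, together with $\mathbf{x} > \mathbf{0}$ so that every product $x_{i_2}\cdots x_{i_m}$ is strictly positive — this is what turns ``nonnegative row'' into ``nonnegative $i$th coordinate of $\mathcal{A}\mathbf{x}^{m-1}$'' and produces the contradiction. No symmetry or eigenvalue machinery is needed.
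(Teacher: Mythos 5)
Your proposal is correct, and the direct argument you say you would present — take the defect vector $\mathbf{x} > \mathbf{0}$ with $\mathcal{A}\mathbf{x}^{m-1} < \mathbf{0}$ and note that a nonnegative row-subtensor $R_i(\mathcal{A})$ would force $(\mathcal{A}\mathbf{x}^{m-1})_i \geq 0$ — is exactly the paper's proof. The first route via the earlier row-subtensor theorem is also valid but unnecessary.
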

\begin{proof}
Assume that the tensor $\mathcal{A} = \left(a_{i_{1} \ldots i_{m}}\right) \in \mathbb{R}_{m, n}$ is almost $\mathbf{E_0}$-tensor that possesses a nonnegative row-subtensor $R_{i}(\mathcal{A}) = \left(a_{i i_{2} \ldots i_{m}}\right) \in \mathbb{R}_{m-1, n}$ for some $i \in I_{n}$. Subsequently, for any vector $\mathbf{x} \in \mathbb{R}^n$ that is nonzero and nonnegative, we have:
\[
\left(\mathcal{A} \mathbf{x}^{m-1}\right)_{i} = \sum_{i_{2}, \ldots, i_{m}=1}^{n} a_{i i_{2} \ldots i_{m}} x_{i_{2}} \ldots x_{i_{m}} \geq 0.
\]
However, this contradicts the existence of a positive real-valued vector $\mathbf{x}$ satisfying $(\mathcal{A}\mathbf{x}^{m-1}) < \mathbf{0}$. Hence, every row-subtensor of $\mathcal{A}$ contains a negative entry.
\end{proof}

The following result is true for almost $\mathbf{E}$-tensors.
	
\begin{theorem}
Every row-subtensor of an almost $\mathbf{E}$-tensor contains at least one negative entry.
\end{theorem}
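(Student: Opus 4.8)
The plan is to mirror the argument of Theorem \ref{al1} almost verbatim, replacing the non-strict inequalities by strict ones wherever the defining property of an almost $\mathbf{E}$-tensor is invoked. Suppose, for contradiction, that $\mathcal{A} = (a_{i_1 \ldots i_m}) \in \mathbb{R}_{m,n}$ is an almost $\mathbf{E}$-tensor that has a nonnegative row-subtensor $R_i(\mathcal{A}) = (a_{i i_2 \ldots i_m})$ for some fixed $i \in I_n$, i.e. $a_{i i_2 \ldots i_m} \geq 0$ for all $i_2, \ldots, i_m \in I_n$. By Definition \ref{d2} there exists a real vector $\mathbf{x} > \mathbf{0}$ with $(\mathcal{A}\mathbf{x}^{m-1}) \leq \mathbf{0}$; in particular $(\mathcal{A}\mathbf{x}^{m-1})_i \leq 0$.

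Now I would compute the $i$-th component directly: since every $x_{i_2}, \ldots, x_{i_m}$ is strictly positive and every entry $a_{i i_2 \ldots i_m}$ of the row-subtensor is nonnegative, we get
\[
(\mathcal{A}\mathbf{x}^{m-1})_i = \sum_{i_2, \ldots, i_m = 1}^{n} a_{i i_2 \ldots i_m} x_{i_2} \ldots x_{i_m} \geq 0.
\]
Combined with $(\mathcal{A}\mathbf{x}^{m-1})_i \leq 0$ this forces $(\mathcal{A}\mathbf{x}^{m-1})_i = 0$, and since each product $x_{i_2}\ldots x_{i_m} > 0$, every entry of $R_i(\mathcal{A})$ must vanish: $a_{i i_2 \ldots i_m} = 0$ for all $i_2, \ldots, i_m$. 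So the $i$-th row-subtensor is identically zero.

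The remaining step — and the only place this differs in substance from Theorem \ref{al1} — is to rule out a zero row-subtensor, which the non-strict sign pattern of almost $\mathbf{E}$-tensors does not immediately exclude. Here I would pass to a proper principal subtensor: delete index $i$ and let $I = I_n \setminus \{i\}$, $r = n-1$, so that $\mathcal{A}_r^I$ is strictly semi positive by Definition \ref{d2}. Take any $\mathbf{0 \neq y \geq 0}$ in $\mathbb{R}^{n}$ supported on $I$ (for instance a coordinate vector $\mathbf{x}^{(j)}$, $j \neq i$); since the $i$-th row of $\mathcal{A}$ is zero we have $(\mathcal{A}\mathbf{y}^{m-1})_i = 0 \geq 0$ with $y_i = 0$, so this contributes nothing, while on the indices of $I$ the values $(\mathcal{A}\mathbf{y}^{m-1})_k = (\mathcal{A}_r^I \mathbf{y}_I^{m-1})_k$ for $k \in I$ witness, via strict semi positivity of $\mathcal{A}_r^I$, an index $k$ with $y_k > 0$ and $(\mathcal{A}\mathbf{y}^{m-1})_k > 0$. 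Thus $\mathcal{A}$ itself satisfies the defining condition of a strictly semi positive tensor on every $\mathbf{0 \neq y \geq 0}$: for vectors with $y_i = 0$ we just argued it, and for vectors with $y_i > 0$ one again uses that all other proper principal subtensors are strictly semi positive together with the usual case analysis (this is exactly Proposition 3.2 of the excerpt applied once we also check the all-positive case — but note a zero $i$-th row makes $(\mathcal{A}\mathbf{x}^{m-1})_i = 0$, not positive, so care is needed). The cleanest route is: a zero row-subtensor means $(\mathcal{A}\mathbf{x}^{m-1})_i = 0$ for \emph{all} $\mathbf{x}$, so $\mathbf{x} > \mathbf{0}$ with $(\mathcal{A}\mathbf{x}^{m-1}) \leq \mathbf{0}$ already fails to be ruled out; instead I invoke that $\mathcal{A}$ being almost $\mathbf{E}$ means $\mathcal{A}$ is \emph{not} strictly semi positive, yet the contradiction is obtained by showing directly that no $\mathbf{x} > \mathbf{0}$ can satisfy $(\mathcal{A}\mathbf{x}^{m-1}) \leq \mathbf{0}$ once the $i$-th row vanishes — indeed restrict to $\mathbf{x}_I$, use strict semi positivity of $\mathcal{A}_r^I$ to find $k \in I$ with $(\mathcal{A}_r^I \mathbf{x}_I^{m-1})_k > 0 = $ nothing, contradiction with $(\mathcal{A}\mathbf{x}^{m-1})_k \leq 0$. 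The main obstacle is precisely this last wrinkle: unlike the $\mathbf{E_0}$ case, nonnegativity of the row only yields $(\mathcal{A}\mathbf{x}^{m-1})_i = 0$ rather than a strict sign violation, so the contradiction must be extracted from the strict semi positivity of the $(n-1)$-dimensional principal subtensors rather than from the row itself.
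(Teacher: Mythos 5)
Your opening computation is fine: if some row-subtensor $R_i(\mathcal{A})$ were nonnegative, then for the vector $\mathbf{x}>\mathbf{0}$ with $\mathcal{A}\mathbf{x}^{m-1}\leq\mathbf{0}$ one gets $(\mathcal{A}\mathbf{x}^{m-1})_i=0$, and since every product $x_{i_2}\cdots x_{i_m}$ is strictly positive this forces $a_{ii_2\ldots i_m}=0$ for all indices, i.e.\ the row is identically zero. The gap is in how you then rule out a zero row. Your contradiction rests on the identity $(\mathcal{A}\mathbf{x}^{m-1})_k=(\mathcal{A}_r^{I}\mathbf{x}_I^{m-1})_k$ for $k\in I=I_n\setminus\{i\}$, but that identity holds only when $x_i=0$; here $x_i>0$, so
\[
(\mathcal{A}\mathbf{x}^{m-1})_k=(\mathcal{A}_r^{I}\mathbf{x}_I^{m-1})_k+\sum_{\substack{(i_2,\ldots,i_m)\\ \text{some } i_j=i}}a_{k i_2\ldots i_m}x_{i_2}\cdots x_{i_m},
\]
and the extra terms involve entries $a_{k i_2\ldots i_m}$ with first index $k\neq i$ — these do not belong to row $i$, need not vanish, and may well be negative. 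Hence $(\mathcal{A}_r^{I}\mathbf{x}_I^{m-1})_k>0$ gives no contradiction with $(\mathcal{A}\mathbf{x}^{m-1})_k\leq 0$. (The earlier variant you sketch, applying strict semi positivity to vectors supported on $I$, only shows $\mathcal{A}$ behaves well on such vectors; it says nothing about the all-positive vector $\mathbf{x}$ that witnesses almost strict semi positivity.)

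The repair is one line and is exactly what the paper uses: by part (b) of the proposition on the entries of almost $(\mathbf{E})$ $\mathbf{E_0}$-tensors (apply Definition \ref{d2} to the coordinate vectors), every diagonal entry of an almost $\mathbf{E}$-tensor satisfies $a_{ii\ldots i}>0$, so no row-subtensor can be identically zero — contradicting what your first step derived. Equivalently, once the row is nonnegative you have $(\mathcal{A}\mathbf{x}^{m-1})_i\geq a_{ii\ldots i}x_i^{m-1}>0$, directly contradicting $(\mathcal{A}\mathbf{x}^{m-1})_i\leq 0$, with no detour through the $(n-1)$-dimensional principal subtensors at all. The paper reaches the same point by splitting, via Remark \ref{classification}, into the case where $\mathcal{A}$ is almost $\mathbf{E_0}$ (handled by Theorem \ref{al1}) and the case where $\mathcal{A}$ is also $\mathbf{E_0}$, where it invokes the positivity of the diagonal; your argument as written does not close the second case.
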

\begin{proof}
Assume $\mathcal{A}$ is an almost $\mathbf{E}$-tensor. Then, from Remark \ref{classification}, $\mathcal{A}$ falls into one of two categories: either it is an almost $\mathbf{E_0}$-tensor, or it is both an almost $\mathbf{E}$-tensor and an $\mathbf{E_0}$-tensor. 

If $\mathcal{A}$ is an almost $\mathbf{E_0}$-tensor, then the conclusion straightforwardly follows from Theorem \ref{al1}. However, if $\mathcal{A}$ is both an almost $\mathbf{E}$-tensor and an $\mathbf{E_0}$-tensor, and there exists a positive real-valued vector $\mathbf{x}$ such that $\mathcal{A}\mathbf{x}^{m-1} = \mathbf{0}$, it implies that $\mathcal{A}$ cannot possess a row-subtensor with all entries being zero. This follows from the fact that the diagonal elements of an almost $\mathbf{E}$-tensor are positive, and thus, each row-subtensor of $\mathcal{A}$ must contain at least one negative entry.
\end{proof}

Next, we establish that almost ($\mathbf{E}$) $\mathbf{E_0}$-tensors maintain their original characteristics when multiplied by a diagonal matrix with positive entries on the diagonal.
\begin{theorem}
For any positive diagonal matrix $D \in \mathbb{R}^{n \times n}$ and tensor $\mathcal{A} \in \mathbb{R}_{m, n}$, the following three statements are considered equivalent:
\begin{itemize}
\item [(i)] tensor $\mathcal{A}$ is an almost ($\mathbf{E}$) $\mathbf{E_0}$-tensor.
\item[(ii)] tensor $D\mathcal{A}$ is an almost ($\mathbf{E}$) $\mathbf{E_0}$-tensor.
\item[(iii)] tensor $\mathcal{A}D$ is an almost ($\mathbf{E}$) $\mathbf{E_0}$-tensor.
\end{itemize}
\end{theorem}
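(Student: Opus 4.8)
The plan is to first pin down how the operation $\mathbf{x} \mapsto \mathcal{A}\mathbf{x}^{m-1}$ behaves under left- and right-multiplication by a positive diagonal matrix, and then transfer both ingredients in the definition of an almost $(\mathbf{E})\ \mathbf{E_0}$-tensor — strict/semi positivity of all proper principal subtensors, and the existence of a witness vector $\mathbf{x}>\mathbf{0}$ with $\mathcal{A}\mathbf{x}^{m-1}<\mathbf{0}$ (resp.\ $\leq \mathbf{0}$) — across that multiplication. For $D = \operatorname{diag}(d_1,\dots,d_n)$ with each $d_i>0$, the key identities are $(D\mathcal{A})\mathbf{x}^{m-1} = D\,(\mathcal{A}\mathbf{x}^{m-1})$, i.e.\ $\big((D\mathcal{A})\mathbf{x}^{m-1}\big)_i = d_i (\mathcal{A}\mathbf{x}^{m-1})_i$, and for $\mathcal{A}D$ (scaling each of the last $m-1$ indices) $(\mathcal{A}D)\mathbf{x}^{m-1} = \mathcal{A}(D\mathbf{x})^{m-1}$ up to the obvious bookkeeping, so that the substitution $\mathbf{y} = D\mathbf{x}$ converts statements about $\mathcal{A}D$ into statements about $\mathcal{A}$. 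Since $d_i>0$, the sign of each component is preserved in both cases, and $\mathbf{x}>\mathbf{0} \iff D\mathbf{x}>\mathbf{0}$.

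I would organize the argument as a cycle $(i)\Rightarrow(ii)\Rightarrow(i)$ and $(i)\Leftrightarrow(iii)$ (or equivalently $(i)\Rightarrow(ii)\Rightarrow(iii)\Rightarrow(i)$), but because each implication is reversible it suffices to prove $(i)\Leftrightarrow(ii)$ and $(i)\Leftrightarrow(iii)$ directly. For $(i)\Leftrightarrow(ii)$: note that any proper principal subtensor of $D\mathcal{A}$ indexed by $J\subsetneq I_n$ equals $D_J\,\mathcal{A}^J_{|J|}$, where $D_J$ is the corresponding positive diagonal submatrix; using Propositions~2 and~3 (the characterizations of $\mathbf{E_0}$- and $\mathbf{E}$-tensors via proper principal subtensors plus a positivity condition at a positive vector) together with the sign-preservation identity above, $\mathcal{A}^J_{|J|}$ is $(\mathbf{E})\ \mathbf{E_0}$ iff $D_J\mathcal{A}^J_{|J|}$ is. Likewise, $\mathbf{x}>\mathbf{0}$ witnesses $\mathcal{A}\mathbf{x}^{m-1}<\mathbf{0}$ (resp.\ $\leq\mathbf{0}$) iff the same $\mathbf{x}$ witnesses $(D\mathcal{A})\mathbf{x}^{m-1}<\mathbf{0}$ (resp.\ $\leq\mathbf{0}$), since multiplying each coordinate by $d_i>0$ does not change its sign. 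Both clauses of Definition~\ref{d1} (resp.\ Definition~\ref{d2}) are therefore simultaneously satisfied or violated, giving $(i)\Leftrightarrow(ii)$. For $(i)\Leftrightarrow(iii)$: here the substitution $\mathbf{y}=D\mathbf{x}$ is the crucial device. A proper principal subtensor of $\mathcal{A}D$ on $J$ is $\mathcal{A}^J_{|J|}D_J$, and since $D_J$ is an invertible positive diagonal matrix, the map $\mathbf{x}_J \mapsto D_J\mathbf{x}_J$ is a bijection of $\mathbb{R}^{|J|}_+\setminus\{\mathbf 0\}$ (and of $\mathbb{R}^{|J|}_{++}$) onto itself, so the defining condition "for each $\mathbf{0}\neq\mathbf{x}_J\geq\mathbf{0}$ there is $k$ with $(\mathbf{x}_J)_k>0$ and $\big((\mathcal{A}^J_{|J|}D_J)\mathbf{x}_J^{m-1}\big)_k\geq 0$ (resp.\ $>0$)" is equivalent, after renaming $\mathbf{y}_J=D_J\mathbf{x}_J$, to the same condition for $\mathcal{A}^J_{|J|}$. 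Similarly, $\mathbf{x}>\mathbf{0}$ satisfies $(\mathcal{A}D)\mathbf{x}^{m-1}<\mathbf{0}$ (resp.\ $\leq\mathbf{0}$) iff $\mathbf{y}=D\mathbf{x}>\mathbf{0}$ satisfies $\mathcal{A}\mathbf{y}^{m-1}<\mathbf{0}$ (resp.\ $\leq\mathbf{0}$), and $\mathbf{x}\mapsto D\mathbf{x}$ is a bijection of $\mathbb{R}^n_{++}$, so existence of such a witness transfers both ways.

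I expect no deep obstacle here; the statement is essentially a change-of-variables bookkeeping result. The one point that needs care — and which I would state explicitly as a preliminary lemma before the main cycle — is the precise convention for the products $D\mathcal{A}$ and $\mathcal{A}D$: $D\mathcal{A}$ scales only the first index (so it acts as $(D\mathcal{A})_{i i_2\ldots i_m} = d_i\, a_{i i_2\ldots i_m}$, and $(D\mathcal{A})\mathbf{x}^{m-1} = D(\mathcal{A}\mathbf{x}^{m-1})$), whereas $\mathcal{A}D$ scales the last $m-1$ indices (so $(\mathcal{A}D)_{i i_2\ldots i_m} = d_{i_2}\cdots d_{i_m}\, a_{i i_2\ldots i_m}$, and $(\mathcal{A}D)\mathbf{x}^{m-1} = \mathcal{A}(D\mathbf{x})^{m-1}$). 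Once those two identities are recorded, each equivalence reduces to the trivial facts that a positive diagonal matrix preserves coordinate signs and restricts to a bijection of $\mathbb{R}^n_+\setminus\{\mathbf{0}\}$ and of $\mathbb{R}^n_{++}$, applied in parallel to the subtensor clause and to the witness-vector clause of the definitions.
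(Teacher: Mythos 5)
Your proposal is correct and follows essentially the same route as the paper: both rest on the identities $(D\mathcal{A})\mathbf{x}^{m-1}=D(\mathcal{A}\mathbf{x}^{m-1})$ and $(\mathcal{A}D)\mathbf{x}^{m-1}=\mathcal{A}(D\mathbf{x})^{m-1}$ (the paper derives the latter componentwise as $(\mathcal{A}D\mathbf{x}^{m-1})_i=\tfrac{1}{d_i}(D\mathcal{A}\mathbf{y}^{m-1})_i$ with $\mathbf{y}=D\mathbf{x}$), together with sign preservation under positive diagonal scaling. The only difference is cosmetic: the paper invokes a cited result (Theorem 4 of Deb and Das) for the preservation of the $(\mathbf{E})\ \mathbf{E_0}$ property of the proper principal subtensors, whereas you argue it directly, which is equally valid.
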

\begin{proof}
To prove $(i) \Rightarrow (ii)$, let us assume $\mathcal{A}$ is an almost ($\mathbf{E}$) $\mathbf{E_0}$-tensor. Consequently, every proper principal subtensor of tensor $\mathcal{A}$ is a ($\mathbf{E}$) $\mathbf{E_0}$-tensor. Then, from Theorem 4 in \cite{Das}, every proper principal subtensor of $D \mathcal{A}$ is a ($\mathbf{E}$) $\mathbf{E_0}$-tensor. Now, since there exists a real-valued positive vector $\mathbf{x}$ in  $\mathbb{R}^{n}$ satisfying $\mathcal{A}\mathbf{x}^{m-1} < \mathbf{0}$ (resp. $\mathcal{A}\mathbf{x}^{m-1} \leq \mathbf{0}$), it implies that $D \mathcal{A}\mathbf{x}^{m-1} < \mathbf{0}$ (resp. $D \mathcal{A}\mathbf{x}^{m-1} \leq \mathbf{0}$). Hence, $D \mathcal{A}$ is an almost ($\mathbf{E}$) $\mathbf{E_0}$-tensor.

To show $(ii) \Rightarrow (i)$, we observe that $\mathcal{A} = D^{-1} D \mathcal{A}$ for any $D = \text{diag}(d_1, \ldots, d_n) \in \mathbb{R}^{n \times n}$, a diagonal matrix with the positive elements on the diagonal. Therefore, the assertion follows from the implication $(i) \Rightarrow (ii)$.

For $(ii) \iff (iii)$, let $\mathcal{A} = (a_{i_1 \ldots i_n}) \in \mathbb{R}_{m, n}$ and any $D = \text{diag}(d_1, \ldots, d_n) \in \mathbb{R}^{n \times n}$, a diagonal matrix with the positive elements $d_i$ on the diagonal, where $i \in I_n$. Then, by the definition of multiplication (see \cite{jy}), for any $i \in I_n$ and for $x \in \mathbb{R}^{n}$, we obtain,
\begin{align}
    (D\mathcal{A} \mathbf{x}^{m-1})_i &= \sum_{i_{2}, \ldots, i_{m}=1}^{n} d_{i} a_{i i_{2} \ldots i_{m}} x_{i_{2}} \ldots x_{i_{m}}, \label{eq:first_eq} \\
\text{and} \quad \left(\mathcal{A} D\mathbf{x}^{m-1}\right)_{i} &= 
 \sum_{i_{2}, \ldots, i_{m}=1}^{n} a_{i i_{2} \ldots i_{m}} d_{i_{2}} d_{i_{3}} \ldots d_{i_{m}} x_{i_{2}} x_{i_{3}} \ldots x_{i_{m}} \nonumber \\
&= \frac{d_i}{d_i} \sum_{i_{2}, \ldots, i_{m}=1}^{n} a_{i i_{2} \ldots i_{m}} \left(d_{i_{2}} x_{i_{2}}\right) \left(d_{i_{3}} x_{i_{3}}\right) \ldots \left(d_{i_{m}} x_{i_{m}}\right) \nonumber \\
&= \frac{1}{d_i} \left(\sum_{i_{2}, \ldots, i_{m}=1}^{n} d_i a_{i i_{2} \ldots i_{m}} y_{i_{2}} y_{i_{3}} \ldots y_{i_{m}}\right) \nonumber \\
&= \frac{1}{d_{i}} \left(D\mathcal{A} y^{m-1}\right)_{i} \label{eq:second_eq}
\end{align}
where $\mathbf{y} = D \mathbf{x} \in \mathbb{R}^n$. The desired result can now be obtained from Equations \ref{eq:first_eq} and \ref{eq:second_eq}.
\end{proof}

Next, our attention shifts towards showing the invariance characteristic of almost ($\mathbf{E}$) $\mathbf{E_0}$-tensors when the subscripts are rearranged.

\begin{theorem}
If any permutation matrix $P \in \mathbb{R}^{n \times n}$ and real tensor $\mathcal{A}$ in $\mathbb{R}_{m, n}$, then tensor $\mathcal{A}$ is almost ($\mathbf{E}$) $\mathbf{E_0}$-tensor iff $P\mathcal{A}P^{T}$ is almost ($\mathbf{E}$) $\mathbf{E_0}$-tensor.
\end{theorem}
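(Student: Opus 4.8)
The plan is to reduce the whole statement to one bookkeeping identity. Writing $\sigma$ for the permutation of $I_n$ determined by $P$, and using the mode–multiplication conventions of the preceding theorem (so that $P$ acts on the first index and $P^T$ on the remaining $m-1$ indices), one has $(P\mathcal{A}P^T)_{i_1 i_2\ldots i_m}=a_{\sigma(i_1)\sigma(i_2)\ldots\sigma(i_m)}$. The first step is to verify
\[
P\mathcal{A}P^T\mathbf{x}^{m-1}=P\bigl(\mathcal{A}(P^T\mathbf{x})^{m-1}\bigr)\qquad\text{for all }\mathbf{x}\in\mathbb{R}^n,
\]
which follows by the substitution $i_k\mapsto\sigma(i_k)$ in $(P\mathcal{A}P^T\mathbf{x}^{m-1})_i=\sum_{i_2,\ldots,i_m} a_{\sigma(i)\sigma(i_2)\ldots\sigma(i_m)}x_{i_2}\cdots x_{i_m}$; this is the only computation involved, and it is exactly the tensor analogue of $PAP^T\mathbf{x}=PA(P^T\mathbf{x})$.

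From this identity the plain $\mathbf{E_0}$ (resp. $\mathbf{E}$) property is seen to be invariant under $\mathcal{A}\mapsto P\mathcal{A}P^T$: given $\mathbf{0}\neq\mathbf{x}\geq\mathbf{0}$, set $\mathbf{v}=P^T\mathbf{x}$ (again $\mathbf{0}\neq\mathbf{v}\geq\mathbf{0}$); if $k$ is an index with $v_k>0$ and $(\mathcal{A}\mathbf{v}^{m-1})_k\geq 0$ (resp. $>0$), then $i=\sigma^{-1}(k)$ satisfies $x_i=v_k>0$ and $(P\mathcal{A}P^T\mathbf{x}^{m-1})_i=(\mathcal{A}\mathbf{v}^{m-1})_k\geq 0$ (resp. $>0$), and since $P^{-1}=P^T$ the converse is the same statement applied to $P^T(\cdot)P$. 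Next, for a proper index set $J\subsetneq I_n$ the subtensor $(P\mathcal{A}P^T)_r^J$ has entries $a_{\sigma(i_1)\ldots\sigma(i_m)}$ with $i_1,\ldots,i_m\in J$, so after relabelling $J$ by the bijection $\sigma|_J\colon J\to\sigma(J)$ it is carried to $\mathcal{A}_r^{\sigma(J)}$ by an $r\times r$ permutation matrix; hence, by the invariance just noted (applied in dimension $r<n$), $(P\mathcal{A}P^T)_r^J$ is $(\mathbf{E})$ $\mathbf{E_0}$ if and only if $\mathcal{A}_r^{\sigma(J)}$ is. Since $J\mapsto\sigma(J)$ permutes the proper subsets of $I_n$ of each fixed size, \emph{every} proper principal subtensor of $P\mathcal{A}P^T$ is $(\mathbf{E})$ $\mathbf{E_0}$ exactly when every proper principal subtensor of $\mathcal{A}$ is.

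It remains to move the defect vector across. If $\mathbf{x}>\mathbf{0}$ satisfies $\mathcal{A}\mathbf{x}^{m-1}<\mathbf{0}$ (resp. $\leq\mathbf{0}$), then $\mathbf{z}=P\mathbf{x}>\mathbf{0}$ and, by the displayed identity, $P\mathcal{A}P^T\mathbf{z}^{m-1}=P(\mathcal{A}\mathbf{x}^{m-1})<\mathbf{0}$ (resp. $\leq\mathbf{0}$), because permuting the entries of a vector preserves the properties ``all entries negative'' and ``all entries nonpositive''; the converse again follows by swapping $P$ and $P^T$. Assembling the two parts with Definitions \ref{d1} and \ref{d2} gives the equivalence in both the almost $\mathbf{E_0}$ and the almost $\mathbf{E}$ cases. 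The only delicate point is to pin down a convention for $P\mathcal{A}P^T$ compatible with the $D\mathcal{A},\ \mathcal{A}D$ notation of the preceding theorem and to keep the direction of the index permutation consistent; once the displayed identity is established, each remaining step is a one‑line consequence, so I expect no real obstacle beyond this bookkeeping.
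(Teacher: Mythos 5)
Your proposal is correct and follows essentially the same route as the paper: establish the conjugation identity $P\mathcal{A}P^{T}\mathbf{x}^{m-1}=P\bigl(\mathcal{A}(P^{T}\mathbf{x})^{m-1}\bigr)$, transfer the defect vector via $\mathbf{z}=P\mathbf{x}$, note that proper principal subtensors are carried to proper principal subtensors, and obtain the converse from $\mathcal{A}=P^{T}(P\mathcal{A}P^{T})P$. The only difference is that you prove the subtensor correspondence directly by relabelling $J$ via $\sigma|_J$, whereas the paper cites an external result for that step, so your version is slightly more self-contained.
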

\begin{proof}
Assume that matrix $P \in \mathbb{R}^{n \times n}$ is any permutation matrix that satisfies $(P\mathbf{z})_i = z_{\sigma(i)}$ for every real vector $\mathbf{z} \in \mathbb{R}^{n}$, where $i \in I_n$ and $\sigma$ is the permutation from $I_n$. Let the tensor $\mathcal{A} \in \mathbb{R}_{m, n}$ be an almost ($\mathbf{E}$) $\mathbf{E_0}$-tensor. Every proper principal subtensor of tensor $\mathcal{A}$ is thus a ($\mathbf{E}$) $\mathbf{E_0}$-tensor, and there is a positive $\mathbf{x} \in \mathbb{R}^{n}$ we have $\mathcal{A}\mathbf{x}^{m-1} < \mathbf{0}$ (resp. $\mathcal{A}\mathbf{x}^{m-1} \leq \mathbf{0}$). Choose any real vector $\mathbf{y} \in \mathbb{R}^{n}$ satisfying $\mathbf{y}^{m-1} = P\mathbf{x}^{m-1}$. Then $\mathbf{y > 0}$, and we have $P\mathcal{A}P^{T}(\mathbf{y}^{m-1}) = P\mathcal{A} P^{T}(P\mathbf{x}^{m-1}) = P\mathcal{A}(P^{T}P)\mathbf{x}^{m-1} = P\mathcal{A}\mathbf{x}^{m-1} < \mathbf{0}$ (resp. $P\mathcal{A}P^{T}(\mathbf{y}^{m-1}) \leq \mathbf{0}$).
 
 Moreover, \cite[Theorem 5]{Das} implies that every proper principal subtensor of the real tensor $P\mathcal{A}P^{T}$ is an ($\mathbf{E}$) $\mathbf{E_0}$-tensor. Consequently, the tensor $P\mathcal{A}P^{T}$ is an almost ($\mathbf{E}$) $\mathbf{E_0}$-tensor. The converse part can be deduced from the fact that the tensor $\mathcal{A}$ can be expressed as $\mathcal{A} = P^{T}(P\mathcal{A}P^{T})P$ for a permutation matrix $P$.
\end{proof}

Next, we present the sufficient and the necessary conditions for the underlying tensor to be classified as almost $\mathbf{E_0}$-tensor.

\begin{theorem}
Suppose that every proper principal subtensor of  real tensor $\mathcal{A} \in \mathbb{R}_{m, n}$ is  $\mathbf{E_0}$-tensor. Then, the tensor $\mathcal{A}$ is  almost $\mathbf{E_0}$-tensor iff there is a positive $\mathbf{x} \in \mathbb{R}^n$ such that $\mathbf{x}^{\top} D\left(\mathcal{A} \mathbf{x}^{m-1}\right) < 0$ for any diagonal matrix $O \neq D \in \mathbb{R}^{n \times n}$ with all nonnegative diagonal elements.
\end{theorem}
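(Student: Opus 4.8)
The plan is to establish both implications by relating the condition ``$\mathbf{x}^{\top}D(\mathcal{A}\mathbf{x}^{m-1})<0$ for every nonzero diagonal $D\geq 0$'' to the condition ``$\mathcal{A}\mathbf{x}^{m-1}<\mathbf{0}$'' appearing in Definition \ref{d1}. Since every proper principal subtensor of $\mathcal{A}$ is already assumed to be $\mathbf{E_0}$, the only thing that needs checking in either direction is the existence of a suitable positive vector.

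For the ``if'' direction, suppose there is a positive $\mathbf{x}\in\mathbb{R}^n$ with $\mathbf{x}^{\top}D(\mathcal{A}\mathbf{x}^{m-1})<0$ for every diagonal $O\neq D\geq 0$. First I would fix an index $i\in I_n$ and choose $D=D^{(i)}$, the diagonal matrix with a $1$ in the $i$th diagonal slot and zeros elsewhere; this is a nonzero nonnegative diagonal matrix, so the hypothesis gives $\mathbf{x}^{\top}D^{(i)}(\mathcal{A}\mathbf{x}^{m-1}) = x_i(\mathcal{A}\mathbf{x}^{m-1})_i<0$. Since $x_i>0$, this forces $(\mathcal{A}\mathbf{x}^{m-1})_i<0$. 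As $i$ was arbitrary, $\mathcal{A}\mathbf{x}^{m-1}<\mathbf{0}$ componentwise, and together with the hypothesis on proper principal subtensors this is exactly Definition \ref{d1}, so $\mathcal{A}$ is almost $\mathbf{E_0}$.

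For the ``only if'' direction, suppose $\mathcal{A}$ is almost $\mathbf{E_0}$. By Definition \ref{d1} there is a positive $\mathbf{x}\in\mathbb{R}^n$ with $\mathcal{A}\mathbf{x}^{m-1}<\mathbf{0}$, i.e. $(\mathcal{A}\mathbf{x}^{m-1})_i<0$ for all $i\in I_n$. Now let $O\neq D=\operatorname{diag}(d_1,\ldots,d_n)\in\mathbb{R}^{n\times n}$ be any diagonal matrix with all $d_i\geq 0$; since $D\neq O$, at least one $d_j>0$. Then
\[
\mathbf{x}^{\top}D(\mathcal{A}\mathbf{x}^{m-1}) = \sum_{i=1}^{n} d_i\, x_i\, (\mathcal{A}\mathbf{x}^{m-1})_i.
\]
Each summand is $\leq 0$ because $d_i\geq 0$, $x_i>0$, and $(\mathcal{A}\mathbf{x}^{m-1})_i<0$, and the summand with index $j$ is strictly negative because $d_j>0$, $x_j>0$, $(\mathcal{A}\mathbf{x}^{m-1})_j<0$. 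Hence the sum is strictly negative, which is the desired inequality.

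I do not anticipate a genuine obstacle here: the statement is essentially a repackaging of the componentwise inequality $\mathcal{A}\mathbf{x}^{m-1}<\mathbf{0}$ through testing against the extreme rays (coordinate diagonal matrices) of the cone of nonnegative diagonal matrices. The only point requiring a small amount of care is the bookkeeping of strict versus non-strict inequalities in the ``only if'' direction — one must use $D\neq O$ to guarantee that at least one term is strictly negative rather than merely nonpositive — but this is routine.
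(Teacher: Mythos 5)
Your proof is correct and follows essentially the same route as the paper's: testing against coordinate diagonal matrices to extract the componentwise inequality $\mathcal{A}\mathbf{x}^{m-1}<\mathbf{0}$ in one direction, and summing the nonpositive terms (with at least one strictly negative because $D\neq O$) in the other. Your version is slightly more explicit than the paper's about why the sum is strictly negative, but the argument is the same.
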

\begin{proof} 
Let $O \neq D = \text{diag}(d_1, \ldots, d_n)$ be a diagonal matrix with $d_i \geq 0$ for every $i \in I_n$. Note that $d_k > 0$ for some $k \in I_n$. Now, suppose $\mathcal{A} \in \mathbb{R}_{m, n}$ is an almost $\mathbf{E_0}$-tensor. Subsequently, there is a positive $\mathbf{x} \in \mathbb{R}^n$ satisfying $(\mathcal{A} \mathbf{x}^{m-1})_i < 0$ for every $i \in I_n$. Consequently, we obtain 
\[
\mathbf{x}^{\top} D(\mathcal{A}\mathbf{x}^{m-1}) = \sum_{i = 1}^n x_i d_i (\mathcal{A}\mathbf{x}^{m-1})_i < 0.
\]

Conversely, let us suppose there is a positive real vector $\mathbf{x} \in \mathbb{R}^n$ such that $\mathbf{x}^{\top} D\left(\mathcal{A} \mathbf{x}^{m-1}\right) < 0$ for any diagonal matrix $O \neq D \in \mathbb{R}^{n \times n}$ with all of its diagonal elements nonnegative. By choosing different diagonal matrices where only the $i$th diagonal entry $d_i > 0$ and the others are zero, we find that $\mathbf{x}^{\top} D(\mathcal{A}\mathbf{x}^{m-1}) = x_i d_i (\mathcal{A}\mathbf{x}^{m-1})_i < 0$. This condition applies to every $i \in I_n$. Consequently, we observe that $\mathbf{x} > \mathbf{0}$ and $\mathcal{A}\mathbf{x}^{m-1} < \mathbf{0}$. Therefore, $\mathcal{A}$ is an almost $\mathbf{E_0}$-tensor.
\end{proof}

Next, we present both the sufficient and the necessary conditions for a tensor to be classified as an almost ($\mathbf{E}$) $\mathbf{E_0}$-tensor, ensuring that every proper principal subtensor is (strictly) semi positive.

\begin{theorem}
Suppose that every proper principal subtensor of a real tensor $\mathcal{A} \in \mathbb{R}_{m, n}$ is a ($\mathbf{E}$) $\mathbf{E_0}$-tensor. Then the following two statements are considered equivalent:
\begin{itemize}
\item [(i)] $\mathcal{A}$ is an almost ($\mathbf{E}$) $\mathbf{E_0}$-tensor.
\item[(ii)] There is the diagonal tensor $\mathcal{D} \in \mathbb{R}_{m, n}$ having all its diagonal elements positive (nonnegative), ensuring that when added to $\mathcal{A}$, the resulting tensor $\mathcal{A} + \mathcal{D}$ possesses a positive null vector.
\end{itemize}
\end{theorem}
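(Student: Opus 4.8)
The plan is to prove both implications directly, using the fact that a diagonal tensor acts componentwise: if $\mathcal{D}$ is the diagonal tensor with diagonal entries $d_1,\dots,d_n$, then $(\mathcal{D}\mathbf{x}^{m-1})_i = d_i x_i^{m-1}$, and hence $\bigl((\mathcal{A}+\mathcal{D})\mathbf{x}^{m-1}\bigr)_i = (\mathcal{A}\mathbf{x}^{m-1})_i + d_i x_i^{m-1}$ for every $i \in I_n$. This identity is the whole engine of the proof.

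For $(i) \Rightarrow (ii)$ I would start from an almost $\mathbf{E_0}$-tensor (resp. almost $\mathbf{E}$-tensor) $\mathcal{A}$. By Definition \ref{d1} (resp. Definition \ref{d2}) there is a vector $\mathbf{x} > \mathbf{0}$ with $\mathcal{A}\mathbf{x}^{m-1} < \mathbf{0}$ (resp. $\mathcal{A}\mathbf{x}^{m-1} \le \mathbf{0}$). Since every $x_i > 0$, I can define $d_i := -\,(\mathcal{A}\mathbf{x}^{m-1})_i / x_i^{m-1}$ and take $\mathcal{D}$ to be the diagonal tensor with these entries. Then $d_i > 0$ for all $i$ in the $\mathbf{E_0}$ case (because the inequality is strict) and $d_i \ge 0$ for all $i$ in the $\mathbf{E}$ case, while by construction $\bigl((\mathcal{A}+\mathcal{D})\mathbf{x}^{m-1}\bigr)_i = 0$ for every $i$; that is, $\mathbf{x}$ is a positive null vector of $\mathcal{A}+\mathcal{D}$, which is precisely (ii).

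For $(ii) \Rightarrow (i)$ I would reverse this. Given a diagonal tensor $\mathcal{D}$ with positive (resp. nonnegative) diagonal entries $d_i$ and a vector $\mathbf{x} > \mathbf{0}$ with $(\mathcal{A}+\mathcal{D})\mathbf{x}^{m-1} = \mathbf{0}$, the identity above gives $(\mathcal{A}\mathbf{x}^{m-1})_i = -d_i x_i^{m-1}$, which is $< 0$ for every $i$ when $d_i > 0$ (resp. $\le 0$ for every $i$ when $d_i \ge 0$), since $x_i^{m-1} > 0$. Thus $\mathcal{A}$ admits a strictly positive vector $\mathbf{x}$ with $\mathcal{A}\mathbf{x}^{m-1} < \mathbf{0}$ (resp. $\mathcal{A}\mathbf{x}^{m-1} \le \mathbf{0}$); combining this with the standing hypothesis that every proper principal subtensor of $\mathcal{A}$ is $\mathbf{E_0}$ (resp. $\mathbf{E}$), Definition \ref{d1} (resp. Definition \ref{d2}) yields that $\mathcal{A}$ is an almost $\mathbf{E_0}$-tensor (resp. almost $\mathbf{E}$-tensor).

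There is no genuine obstacle here; the two points that need care are bookkeeping. First, one must keep the strict/weak dichotomy matched with the positive/nonnegative dichotomy: the strict inequality $\mathcal{A}\mathbf{x}^{m-1} < \mathbf{0}$ defining almost $\mathbf{E_0}$ corresponds exactly to a strictly positive diagonal of $\mathcal{D}$, whereas the weak inequality $\mathcal{A}\mathbf{x}^{m-1} \le \mathbf{0}$ defining almost $\mathbf{E}$ only forces a nonnegative diagonal. Second, I would note explicitly that a strictly positive $\mathbf{x}$ with $\mathcal{A}\mathbf{x}^{m-1} < \mathbf{0}$ (resp. $\le \mathbf{0}$) automatically prevents $\mathcal{A}$ itself from being (strictly) semi positive — every component of $\mathbf{x}$ is positive yet no coordinate of $\mathcal{A}\mathbf{x}^{m-1}$ is $\ge 0$ (resp. $> 0$) — so that Definitions \ref{d1} and \ref{d2} apply verbatim and $\mathcal{A}$ is genuinely ``almost''.
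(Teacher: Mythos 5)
Your proof is correct, and it is genuinely different from what the paper does: the paper disposes of this theorem in one line by declaring it to be the contrapositive of an external result (Theorem 7 of Deb and Das), whereas you give a direct, self-contained argument. Your key device --- setting $d_i := -(\mathcal{A}\mathbf{x}^{m-1})_i / x_i^{m-1}$ so that $\bigl((\mathcal{A}+\mathcal{D})\mathbf{x}^{m-1}\bigr)_i = (\mathcal{A}\mathbf{x}^{m-1})_i + d_i x_i^{m-1} = 0$ --- is in fact exactly the construction the paper deploys in its very next theorem (building an almost $\mathbf{E}$- and completely $\mathbf{S_0}$-tensor from an almost $\mathbf{E_0}$-tensor), so your proof has the virtue of unifying the two results and of making the equivalence transparent without any appeal to outside literature. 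You also get the bookkeeping right: the strict inequality $\mathcal{A}\mathbf{x}^{m-1} < \mathbf{0}$ in the almost $\mathbf{E_0}$ case pairs with a strictly positive diagonal, and the weak inequality $\mathcal{A}\mathbf{x}^{m-1} \leq \mathbf{0}$ in the almost $\mathbf{E}$ case pairs with a nonnegative diagonal, which matches the parenthetical convention of the statement. Your closing remark that a positive $\mathbf{x}$ with $\mathcal{A}\mathbf{x}^{m-1} < \mathbf{0}$ (resp.\ $\leq \mathbf{0}$) already rules out (strict) semi positivity of $\mathcal{A}$ is not strictly needed under the paper's definitions (which only ask for the existence of such an $\mathbf{x}$ together with the subtensor condition), but it is a harmless and clarifying observation. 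In short: same theorem, but your route is the more informative one; the paper's buys brevity at the cost of an external dependency.
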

\begin{proof}
This result represents the contrapositive statement of \cite[Theorem 7]{Das}.
\end{proof}

Next, we show the construction of a tensor that is both an almost $\mathbf{E}$-tensor and a completely $\mathbf{S_{0}}$-tensor from an almost $\mathbf{E_0}$-tensor.

\begin{theorem}
If $\mathcal{A} \in \mathbb{R}_{m, n}$ is almost $\mathbf{E_0}$-tensor, then there is a diagonal tensor $\mathcal{D} \in \mathbb{R}_{m, n}$ having all positive diagonal elements, resulting in the tensor $\mathcal{A + D} \in \mathbb{R}_{m, n}$ being almost $\mathbf{E}$-tensor and completely $\mathbf{S_{0}}$-tensor.
\end{theorem}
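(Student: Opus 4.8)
The plan is to read $\mathcal{D}$ off directly from the witness vector supplied by Definition~\ref{d1}. Since $\mathcal{A}$ is almost $\mathbf{E_0}$, fix $\bar{\mathbf{x}} > \mathbf{0}$ with $(\mathcal{A}\bar{\mathbf{x}}^{m-1})_i < 0$ for all $i \in I_n$, and let $\mathcal{D} = (d_{i_1 \ldots i_m}) \in \mathbb{R}_{m,n}$ be the diagonal tensor with $d_{i \ldots i} := -(\mathcal{A}\bar{\mathbf{x}}^{m-1})_i / \bar{x}_i^{\,m-1}$ for $i \in I_n$ and all other entries zero. Then each $d_{i \ldots i} > 0$, and $\big((\mathcal{A}+\mathcal{D})\bar{\mathbf{x}}^{m-1}\big)_i = (\mathcal{A}\bar{\mathbf{x}}^{m-1})_i + d_{i \ldots i}\,\bar{x}_i^{\,m-1} = 0$ for every $i$, so $\bar{\mathbf{x}}$ is a positive null vector of $\mathcal{A}+\mathcal{D}$. (This sharpens the construction behind the preceding theorem, where the diagonal is only forced to be nonnegative; here the strict inequality $\mathcal{A}\bar{\mathbf{x}}^{m-1} < \mathbf{0}$ yields strict positivity of all $d_{i\ldots i}$.)

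First I would verify that $\mathcal{A}+\mathcal{D}$ is almost $\mathbf{E}$. For any $J \subsetneq I_n$ with $|J| = r$, the proper principal subtensor is $(\mathcal{A}+\mathcal{D})_r^{J} = \mathcal{A}_r^{J} + \mathcal{D}_r^{J}$, where $\mathcal{A}_r^{J}$ is semi positive (because $\mathcal{A}$ is almost $\mathbf{E_0}$) and $\mathcal{D}_r^{J}$ is diagonal with strictly positive diagonal. Given $\mathbf{0} \neq \mathbf{y} \geq \mathbf{0}$, pick $k$ with $y_k > 0$ and $(\mathcal{A}_r^{J}\mathbf{y}^{m-1})_k \geq 0$; then $\big((\mathcal{A}_r^{J}+\mathcal{D}_r^{J})\mathbf{y}^{m-1}\big)_k \geq d_{k \ldots k}\,y_k^{\,m-1} > 0$, so $(\mathcal{A}+\mathcal{D})_r^{J}$ is strictly semi positive. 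Since moreover $\bar{\mathbf{x}} > \mathbf{0}$ and $(\mathcal{A}+\mathcal{D})\bar{\mathbf{x}}^{m-1} = \mathbf{0} \leq \mathbf{0}$, Definition~\ref{d2} gives that $\mathcal{A}+\mathcal{D}$ is an almost $\mathbf{E}$-tensor.

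Next I would prove that $\mathcal{A}+\mathcal{D}$ is a completely $\mathbf{S_0}$-tensor. The tensor $\mathcal{A}+\mathcal{D}$ itself is $\mathbf{S_0}$, witnessed by $\bar{\mathbf{x}}$ (since $\bar{\mathbf{x}} \neq \mathbf{0}$, $\bar{\mathbf{x}} \geq \mathbf{0}$, and $(\mathcal{A}+\mathcal{D})\bar{\mathbf{x}}^{m-1} = \mathbf{0} \geq \mathbf{0}$). Its proper principal subtensors are strictly semi positive, hence semi positive, so it suffices to establish the auxiliary claim: \emph{every semi positive tensor $\mathcal{C} \in \mathbb{R}_{m,r}$ is an $\mathbf{S_0}$-tensor.} I would prove this by a Brouwer fixed-point argument on the simplex $\Delta = \{\mathbf{z} \in \mathbb{R}^r : \mathbf{z} \geq \mathbf{0},\ \sum_i z_i = 1\}$. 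Put $f(\mathbf{z}) = \mathcal{C}\mathbf{z}^{m-1}$ and $w(\mathbf{z}) = \sum_i \max\{0,\, z_i - f_i(\mathbf{z})\}$. If $w(\mathbf{z}) = 0$ for some $\mathbf{z} \in \Delta$, then $f(\mathbf{z}) \geq \mathbf{z} \geq \mathbf{0}$ with $\mathbf{z} \neq \mathbf{0}$ and $\mathcal{C}$ is $\mathbf{S_0}$; otherwise $w$ is a positive continuous function on the compact set $\Delta$, so $T(\mathbf{z})_i = \max\{0,\, z_i - f_i(\mathbf{z})\}/w(\mathbf{z})$ is a continuous self-map of $\Delta$ and has a fixed point $\mathbf{z}^{*}$. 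With $\mu = w(\mathbf{z}^{*}) > 0$, the relation $\mu z_i^{*} = \max\{0,\, z_i^{*} - f_i(\mathbf{z}^{*})\}$ forces $f_i(\mathbf{z}^{*}) = (1-\mu)z_i^{*}$ on the support of $\mathbf{z}^{*}$ and $f_i(\mathbf{z}^{*}) \geq 0$ off it; applying semi positivity to $\mathbf{z}^{*}$ produces an index $k$ with $z_k^{*} > 0$ and $f_k(\mathbf{z}^{*}) \geq 0$, whence $1 - \mu \geq 0$, and therefore $f(\mathbf{z}^{*}) = \mathcal{C}(\mathbf{z}^{*})^{m-1} \geq \mathbf{0}$ with $\mathbf{z}^{*} \neq \mathbf{0}$. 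Applying this claim to each proper principal subtensor of $\mathcal{A}+\mathcal{D}$ shows that $\mathcal{A}+\mathcal{D}$ and all its principal subtensors are $\mathbf{S_0}$-tensors, i.e.\ $\mathcal{A}+\mathcal{D}$ is completely $\mathbf{S_0}$.

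The routine ingredients are the explicit choice of $\mathcal{D}$ and the elementary facts that adding a positive diagonal tensor sends an $\mathbf{E_0}$-tensor to an $\mathbf{E}$-tensor and that a strictly semi positive tensor is in particular semi positive. The main obstacle is the auxiliary claim that semi positive tensors are $\mathbf{S_0}$-tensors: the two delicate points there are the dichotomy on $w(\mathbf{z})=0$ that makes $T$ a genuine continuous self-map of $\Delta$ (so that Brouwer's theorem applies), and extracting the sign of $1-\mu$ at the fixed point from the semi positivity hypothesis.
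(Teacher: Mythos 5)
Your choice of $\mathcal{D}$ is exactly the paper's: $d_{i\ldots i}=-(\mathcal{A}\bar{\mathbf{x}}^{m-1})_i/\bar{x}_i^{\,m-1}$ built from the witness $\bar{\mathbf{x}}>\mathbf{0}$, and your verification that $(\mathcal{A}+\mathcal{D})\bar{\mathbf{x}}^{m-1}=\mathbf{0}$ and that each proper principal subtensor $\mathcal{A}_r^{J}+\mathcal{D}_r^{J}$ is strictly semi positive (adding a positive diagonal upgrades $\mathbf{E_0}$ to $\mathbf{E}$) tracks the paper's argument step for step, so the ``almost $\mathbf{E}$'' half is the same proof. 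Where you genuinely diverge is the ``completely $\mathbf{S_0}$'' half. The paper notes that $\bar{\mathbf{x}}$ witnesses $\mathcal{A}+\mathcal{D}$ being an $\mathbf{S_0}$-tensor and then simply writes ``consequently, $\mathcal{A}+\mathcal{D}$ is completely $\mathbf{S_0}$,'' leaving unargued the claim that its proper principal subtensors --- known only to be strictly semi positive --- are themselves $\mathbf{S_0}$-tensors; this is not automatic, especially since the paper itself stresses that the matrix identity $\mathbf{E_0}=\mathbf{\bar{S}_0}$ does not carry over to tensors in the reverse direction. You supply precisely the missing lemma, that every semi positive tensor is an $\mathbf{S_0}$-tensor, and your Brouwer fixed-point proof on the simplex is correct: the dichotomy on $w(\mathbf{z})=0$ either produces the $\mathbf{S_0}$ witness outright or makes $T$ a continuous self-map of $\Delta$, and at a fixed point $\mathbf{z}^{*}$ semi positivity forces $1-\mu\geq 0$ and hence $\mathcal{C}(\mathbf{z}^{*})^{m-1}\geq\mathbf{0}$. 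This costs an extra topological ingredient the paper never invokes, but it buys a complete proof of the ``completely $\mathbf{S_0}$'' assertion (and, as a by-product, the tensor analogue of the inclusion of semi positive tensors in the completely $\mathbf{S_0}$ class), so your write-up is if anything more rigorous than the paper's at the one point where the paper's proof has a gap.
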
	
\begin{proof}
Let $\mathcal{A}$ be an almost $\mathbf{E_0}$-tensor. Then every proper principal subtensors of tensor $\mathcal{A}$ is a  $\mathbf{E_0}$-tensor, and there is a positive real vector $\mathbf{x} \in \mathbb{R}^n$ such that $(\mathcal{A} \mathbf{x}^{m-1}) < 0$.

Now, consider a diagonal tensor $\mathcal{D}$ with its diagonal elements defined by
\[
d_{ii\ldots i} =  \frac{-(\mathcal{A} \mathbf{x}^{m-1})_i}{x_i\ldots x_i} = \frac{-(\mathcal{A} \mathbf{x}^{m-1})_i}{x_i^{m-1}} \quad \text{for each } i \in I_n.
\]
Importantly, these diagonal components satisfy $d_{ii\ldots i} > 0$. This leads to positive diagonal entries in the tensor $\mathcal{D}$, and consequently, $(\mathcal{D} \mathbf{x}^{m-1})_i = d_{ii\ldots i}x_{i}^{m-1} = -(\mathcal{A} \mathbf{x}^{m-1})_i$, that is, a positive $\mathbf{x} \in \mathbb{R}^n$ satisfying
		\begin{equation}\label{diag}
			(\mathcal{A + D}) \mathbf{x}^{m-1} = \mathbf{0}.
		\end{equation}
The structure of $(\mathcal{A + D})_r^J = \mathcal{A}_r^J + \mathcal{D}_r^J$ showcases that it is a $\mathbf{E}$-tensor for every index set $J$, where $J$ is a proper subset of $I_n$ with $|J| = r$ $(\text{where }1 \leq r < n)$. This is because for every proper subset $J$, $\mathcal{A}_r^J$ is a $\mathbf{E_0}$-tensor and the diagonal tensor $\mathcal{D}_r^J$ is nonnegative, having positive diagonal entries. Then the equation (\ref{diag}) implies that tensor $\mathcal{A + D}$ is an almost $\mathbf{E}$-tensor, as each proper principal subtensors is a $\mathbf{E}$-tensor. Furthermore, from the equation (\ref{diag}), the tensor $\mathcal{A + D}$ is  $\mathbf{S_0}$-tensor. Consequently, $\mathcal{A + D}$ is  completely $\mathbf{S_{0}}$-tensor. This concludes the proof of the theorem.
\end{proof}


\section{Almost (strictly) copositive tensors}\label{sec: Main results}

In this section, we initially present the notion of almost (strictly) copositive tensor, which naturally extends that of an almost (strictly) copositive matrix \cite{H,val}. Tensors of this nature play a pivotal role in formulating criteria for copositivity.

\begin{definition}
A tensor $\mathcal{A} \in \mathbb{R}_{m, n}$ is defined as almost (strictly) copositive if $\mathcal{A}$ is not a (strictly) copositive tensor but each of its proper principal subtensors is (strictly) copositive.
We refer to such tensors as almost ($\mathbf{C}$) $\mathbf{C_0}$-tensors.
\end{definition}

We construct a tensor which is an {\it almost (strictly) copositive} tensor.

\begin{example}
Consider a tensor $\mathcal{A}$ represented by $\left(a_{i_1 i_2 i_3}\right) \in \mathbb{R}_{3,2}$, where specific entries are defined such that $a_{111}=1$,  $a_{212}=-3$, $a_{222}=1$, $a_{112}=-2$, and all the remaining entries are zero. For any vector $\mathbf{x} = (x_1, x_2)^{T} \in \mathbb{R}^2$,
$$
\mathcal{A} \mathbf{x}^2=\left(\begin{array}{l}
x_1^2-2x_1 x_2 \\
x_2^2-3x_1 x_2
\end{array}\right).
$$
We observe that every proper principal subtensor of tensor $\mathcal{A}$ is a ($\mathbf{C}$) $\mathbf{C_0}$-tensor. Additionally, when $\mathbf{x} = (1, 1)^{T} > \mathbf{0}$, we have $\mathcal{A} \mathbf{x}^3 = -3 < \mathbf{0}$. Consequently, $\mathcal{A}$ is an almost ($\mathbf{C}$) $\mathbf{C_0}$-tensor.
\end{example}

Next, assuming symmetry, we show the equivalence between almost copositive and semi-positive tensors. We will depend on the following results for our subsequent analysis.

\begin{theorem}[\cite{Ysong}]\label{c1} 
Each copositive tensor is a semi positive tensor. However, in general, the converse statement does not hold true.
\end{theorem}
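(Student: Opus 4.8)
The plan is to prove the statement ``Each copositive tensor is a semi positive tensor'' by a direct argument using the definitions, exactly parallel to the matrix case. Suppose $\mathcal{A} \in \mathbb{R}_{m,n}$ is copositive, i.e. $\mathcal{A}\mathbf{x}^{m} \geq 0$ for all $\mathbf{x} \in \mathbb{R}^n_+$. I must show that for every $\mathbf{0} \neq \mathbf{x} \geq \mathbf{0}$ there is an index $k$ with $x_k > 0$ and $(\mathcal{A}\mathbf{x}^{m-1})_k \geq 0$.

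First I would argue by contradiction: fix some $\mathbf{0} \neq \mathbf{x} \geq \mathbf{0}$ and suppose that for \emph{every} index $k$ with $x_k > 0$ we have $(\mathcal{A}\mathbf{x}^{m-1})_k < 0$. Let $S = \{k \in I_n : x_k > 0\}$, which is nonempty. Then compute
\[
\mathcal{A}\mathbf{x}^{m} = \langle \mathbf{x}, \mathcal{A}\mathbf{x}^{m-1}\rangle = \sum_{k \in I_n} x_k (\mathcal{A}\mathbf{x}^{m-1})_k = \sum_{k \in S} x_k (\mathcal{A}\mathbf{x}^{m-1})_k.
\]
Each term in the last sum has $x_k > 0$ and $(\mathcal{A}\mathbf{x}^{m-1})_k < 0$, so $\mathcal{A}\mathbf{x}^m < 0$, contradicting copositivity of $\mathcal{A}$. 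Hence there must exist $k \in S$ with $(\mathcal{A}\mathbf{x}^{m-1})_k \geq 0$, which is precisely the semi positivity condition. This settles the first sentence of the theorem.

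For the second sentence — that the converse fails in general — I would exhibit a concrete semi positive tensor that is not copositive. A natural candidate is a small non-symmetric example in $\mathbb{R}_{3,2}$: pick entries so that $\mathcal{A}\mathbf{x}^2$ has, in each coordinate, a guaranteed nonnegative component whenever the corresponding $x_i$ is positive (securing semi positivity via the coordinate-wise test), while the scalar form $\mathcal{A}\mathbf{x}^3 = x_1(\mathcal{A}\mathbf{x}^2)_1 + x_2(\mathcal{A}\mathbf{x}^2)_2$ takes a negative value at some $\mathbf{x} \geq \mathbf{0}$ (destroying copositivity). One then verifies semi positivity by checking the two proper principal subtensors (the diagonal entries, which should be nonnegative) and the behavior on $\mathbf{x} > \mathbf{0}$, and exhibits a specific $\mathbf{x}$ making the cubic form negative.

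I do not expect any real obstacle here; the only point requiring a little care is the second part, where the example must genuinely be semi positive (the coordinate-wise ``there exists $k$'' quantifier is weaker than copositivity, so this is possible) rather than merely failing to be copositive. The asymmetry is essential, since the later theorem in the paper asserts the two notions coincide for symmetric tensors, so the counterexample must be non-symmetric by design.
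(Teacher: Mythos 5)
The forward implication in your proposal is complete and correct, and it is the standard argument (the paper itself gives no proof, since it simply quotes this result from Song and Qi): writing $\mathcal{A}\mathbf{x}^{m}=\sum_{k}x_k(\mathcal{A}\mathbf{x}^{m-1})_k$ and observing that the sum runs only over the support of $\mathbf{x}$ turns a violation of semi positivity directly into a violation of copositivity.

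The second half, however, is a plan rather than a proof: you describe what a counterexample should look like but never exhibit or verify one, so the claim that the converse fails is not actually established as written. The gap is easy to close, and the paper itself contains a suitable tensor in the final example of Section~\ref{sec: Main results}: take $\mathcal{A}\in\mathbb{R}_{3,2}$ with $a_{111}=1$, $a_{112}=-2$ and all other entries zero, so that
\[
\mathcal{A}\mathbf{x}^{2}=\begin{pmatrix}x_1^2-2x_1x_2\\ 0\end{pmatrix}.
\]
If $x_2>0$ then $(\mathcal{A}\mathbf{x}^{2})_2=0\geq 0$, while if $x_2=0$ and $\mathbf{x}\neq\mathbf{0}$ then $x_1>0$ and $(\mathcal{A}\mathbf{x}^{2})_1=x_1^2>0$; hence $\mathcal{A}$ is semi positive. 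Yet $\mathcal{A}\mathbf{y}^{3}=1-4=-3<0$ for $\mathbf{y}=(1,2)^{T}\geq\mathbf{0}$, so $\mathcal{A}$ is not copositive. Your remark that any such example must be non-symmetric, because of Theorem~\ref{c2}, is correct and is exactly the right sanity check on the construction.
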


\begin{theorem}[\cite{Ysong}]\label{c2} 
	If $\mathcal{A} \in \mathbb{R}_{m, n}$ is symmetric and semi positive tensor, then it is a copositive tensor.
\end{theorem}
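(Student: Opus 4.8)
The plan is to argue by contradiction via a compactness/optimization argument, using symmetry of $\mathcal{A}$ in an essential way to differentiate the form $\mathcal{A}\mathbf{x}^m$. Suppose $\mathcal{A} \in S_{m,n}$ is semi positive but not copositive. Then there is $\bar{\mathbf{x}} \in \mathbb{R}^n_+ \setminus \{\mathbf{0}\}$ with $\mathcal{A}\bar{\mathbf{x}}^m < 0$; since $\mathcal{A}(t\mathbf{x})^m = t^m \mathcal{A}\mathbf{x}^m$ for $t>0$, we may rescale so that $\bar{\mathbf{x}}$ lies in the simplex $\Delta = \{\mathbf{x} \geq \mathbf{0} : \sum_{i} x_i = 1\}$, still with $\mathcal{A}\bar{\mathbf{x}}^m < 0$. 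As $\Delta$ is compact and $\mathbf{x} \mapsto \mathcal{A}\mathbf{x}^m$ is continuous, this function attains its minimum on $\Delta$ at some $\mathbf{x}^* \in \Delta$, and $\mathcal{A}(\mathbf{x}^*)^m \leq \mathcal{A}\bar{\mathbf{x}}^m < 0$.

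Next I would exploit symmetry: for $\mathcal{A} \in S_{m,n}$ one has $\nabla_{\mathbf{x}}\big(\mathcal{A}\mathbf{x}^m\big) = m\,\mathcal{A}\mathbf{x}^{m-1}$. The constraints defining $\Delta$ (the single linear equality $\sum_i x_i = 1$ together with the inequalities $-x_i \leq 0$) satisfy a constraint qualification at every point of $\Delta$, so the KKT conditions hold at the minimizer $\mathbf{x}^*$: there exist $\lambda \in \mathbb{R}$ and $\boldsymbol{\mu} \in \mathbb{R}^n_+$ with $\mu_i x_i^* = 0$ for all $i$ such that
\[
m\,\mathcal{A}(\mathbf{x}^*)^{m-1} = \lambda \mathbf{1} - \boldsymbol{\mu}.
\]
Taking the inner product with $\mathbf{x}^*$ and using $\sum_i x_i^* = 1$ and $\langle \boldsymbol{\mu}, \mathbf{x}^* \rangle = 0$ gives $m\,\mathcal{A}(\mathbf{x}^*)^m = \lambda$, whence $\lambda < 0$.

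To finish: for every index $i$ with $x_i^* > 0$ we have $\mu_i = 0$, so $\big(\mathcal{A}(\mathbf{x}^*)^{m-1}\big)_i = \lambda/m < 0$. Since $\mathbf{x}^* \neq \mathbf{0}$, this says that $\mathcal{A}(\mathbf{x}^*)^{m-1}$ is strictly negative on the whole support of $\mathbf{x}^*$, contradicting the semi positivity of $\mathcal{A}$, which requires some $k$ with $x_k^* > 0$ and $\big(\mathcal{A}(\mathbf{x}^*)^{m-1}\big)_k \geq 0$. Hence $\mathcal{A}$ must be copositive.

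The step I expect to be the main obstacle is the optimization argument: one must justify the KKT/Lagrange-multiplier conditions at a boundary minimizer of the simplex (checking the constraint qualification, including at vertices) and, crucially, use symmetry to identify the gradient of $\mathcal{A}\mathbf{x}^m$ with $m\,\mathcal{A}\mathbf{x}^{m-1}$ — without symmetry the gradient need not be proportional to $\mathcal{A}\mathbf{x}^{m-1}$ and the contradiction with semi positivity is lost, which is consistent with the fact that Theorem~\ref{c2} genuinely requires the symmetric hypothesis (cf. Theorem~\ref{c1}). A variant avoiding explicit multipliers is to choose a minimizer $\mathbf{x}^*$ of smallest support and perturb within the face it spans, but the multiplier computation above is the most direct route.
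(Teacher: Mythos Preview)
Your argument is correct. The contradiction-via-minimization on the simplex, the identification $\nabla(\mathcal{A}\mathbf{x}^m)=m\,\mathcal{A}\mathbf{x}^{m-1}$ from symmetry, and the KKT computation all go through as written; for the simplex the constraints are affine, so the KKT conditions are necessary at any local minimizer without further constraint qualification, which disposes of the worry you flag about boundary points and vertices.

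As for comparison: the present paper does not prove Theorem~\ref{c2} at all. It is quoted from \cite{Ysong} and used as a black box in the proofs of Theorems~\ref{l1} and~\ref{mr1}. So there is no ``paper's own proof'' to set your argument against here. Your route is in fact the standard one underlying the cited result: minimize $\mathcal{A}\mathbf{x}^m$ over the standard simplex and read off from the first-order optimality conditions that $(\mathcal{A}(\mathbf{x}^*)^{m-1})_i$ equals the (negative) Lagrange multiplier on the support of $\mathbf{x}^*$, contradicting semi positivity. Your closing remark that symmetry is genuinely needed to identify the gradient with $m\,\mathcal{A}\mathbf{x}^{m-1}$ is exactly right and matches the role symmetry plays in the paper's use of this theorem.
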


The following statement represents that when symmetry is assumed, the classes of almost $\mathbf{E_0}$-tensors and almost $\mathbf{C_0}$-tensors are equivalent.

\begin{theorem}\label{l1}
For a real tensor $\mathcal{A} \in S_{m, n}$, the following two conditions are equivalent:
	\begin{itemize}
	\item[(i)] tensor $\mathcal{A}$ is almost $\mathbf{E_0}$.
	\item[(ii)] tensor$\mathcal{A}$ is almost $\mathbf{C_0}$.
\end{itemize}
\end{theorem}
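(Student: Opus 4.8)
The plan is to reduce the equivalence to the results already available: Theorem \ref{c1} (copositive $\Rightarrow$ semi positive) and Theorem \ref{c2} (symmetric semi positive $\Rightarrow$ copositive), which together say that \emph{for symmetric tensors, $\mathbf{E_0} = \mathbf{C_0}$}. Since ``almost $X$'' is defined by two requirements — all proper principal subtensors have property $X$, and the tensor itself fails property $X$ — I would handle these two requirements separately, and the key observation is that each proper principal subtensor of a symmetric tensor is again symmetric.

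First I would prove $(i) \Rightarrow (ii)$. Suppose $\mathcal{A} \in S_{m,n}$ is almost $\mathbf{E_0}$. By definition, every proper principal subtensor $\mathcal{A}_r^J$ ($J \subsetneq I_n$) is semi positive; since $\mathcal{A}$ is symmetric, each $\mathcal{A}_r^J$ is symmetric as well, so by Theorem \ref{c2} each $\mathcal{A}_r^J$ is copositive. It remains to show $\mathcal{A}$ itself is not copositive. Since $\mathcal{A}$ is almost $\mathbf{E_0}$ it is not semi positive, and by Theorem \ref{c1} every copositive tensor is semi positive; hence $\mathcal{A}$ cannot be copositive. Thus $\mathcal{A}$ is almost $\mathbf{C_0}$.

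The converse $(ii) \Rightarrow (i)$ is symmetric in spirit. If $\mathcal{A} \in S_{m,n}$ is almost $\mathbf{C_0}$, then each proper principal subtensor $\mathcal{A}_r^J$ is copositive, hence (Theorem \ref{c1}) semi positive. For the failure clause: $\mathcal{A}$ is not copositive; were $\mathcal{A}$ semi positive, then being symmetric it would be copositive by Theorem \ref{c2}, a contradiction. So $\mathcal{A}$ is not semi positive, and therefore $\mathcal{A}$ is almost $\mathbf{E_0}$.

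I do not anticipate a genuine obstacle here: the whole argument is a bookkeeping exercise chaining Theorems \ref{c1} and \ref{c2} together, with the only substantive point being the (routine) fact that symmetry is inherited by principal subtensors, so that Theorem \ref{c2} applies at every sub-level. The one place to be slightly careful is to state explicitly that ``not semi positive'' and ``not copositive'' for the full tensor $\mathcal{A}$ are equivalent under symmetry — this is just the contrapositive pairing of Theorems \ref{c1} and \ref{c2} — and to note that the proper-subtensor hypotheses transfer in one direction only via Theorem \ref{c2} and the reverse via Theorem \ref{c1}, but both directions are available, so the equivalence closes cleanly.
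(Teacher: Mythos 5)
Your proof is correct and follows essentially the same route as the paper: both directions chain Theorem \ref{c1} with Theorem \ref{c2} (and its contrapositive), using that symmetry is inherited by proper principal subtensors; the only cosmetic difference is that the paper shows non-copositivity directly by computing $\mathcal{A}\mathbf{x}^{m}=\mathbf{x}^{T}(\mathcal{A}\mathbf{x}^{m-1})<0$, while you route it through the contrapositive of Theorem \ref{c1}. One small point worth making explicit: Definition \ref{d1} requires a positive $\mathbf{x}$ with $\mathcal{A}\mathbf{x}^{m-1}<\mathbf{0}$ rather than mere failure of semi positivity, and in your $(ii)\Rightarrow(i)$ step these coincide only because all proper principal subtensors are already semi positive (this is the content of Proposition 3.1), a step the paper's own proof also leaves implicit.
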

\begin{proof}
$(i)$ $\Rightarrow$ $(ii)$. 
Let tensor $\mathcal{A}$ be an almost $\mathbf{E_0}$-tensor. Consequently, every proper principal subtensors of tensor $\mathcal{A}$ is a  $\mathbf{E_0}$-tensor, and there is a positive real vector $\mathbf{x} \in \mathbb{R}^n$ such that $(\mathcal{A} \mathbf{x}^{m-1}) < 0$. This implies that for a vector $\mathbf{x > 0}$ we have,
 $$
\mathcal{A} \mathbf{x}^m:=\mathbf{x}^{\top}\left(\mathcal{A} \mathbf{x}^{m-1}\right)=\sum_{i_1, \ldots, i_m=1}^n a_{i_1 \ldots i_m} x_{i_1} \ldots x_{i_m} < 0 .
$$
Further, by Theorem \ref{c2}, every symmetric semi positive tensor is copositive imply that all proper principal subtensors are copositive. Hence tensor $\mathcal{A}$ is an almost $\mathbf{C_0}$-tensor.
	
Now, to show $(ii)$ $\Rightarrow$ $(i)$, assume that tensor $\mathcal{A} \in \mathbb{R}_{m, n}$ is an almost $\mathbf{C_0}$-tensor. Then every proper principal subtensor of tensor $\mathcal{A}$ is a $\mathbf{C_0}$-tensor and from Theorem \ref{c1}, every proper principal subtensor of $\mathcal{A}$ is a $\mathbf{E_0}$-tensor. Now, by \ref{c2}, if symmetric tensor $\mathcal{A}$ is not a $\mathbf{C_0}$-tensor, it implies that $\mathcal{A}$ is not a $\mathbf{E_0}$-tensor. Hence, $\mathcal{A}$ is an almost $\mathbf{E_0}$-tensor.
\end{proof}

\begin{remark}
It is important to observe that the aforementioned result loses validity upon removing the symmetry assumption. The subsequent illustrations involve two instances of non-symmetric tensors, serving to affirm our assertion. 
\end{remark}
\begin{example}
Consider a tensor $\mathcal{A}$ represented by $\left(a_{i_1 i_2 i_3}\right) \in \mathbb{R}_{3,3}$, where specific entries are defined such that $a_{111}=a_{332}=a_{333}=1, a_{112}=a_{331}=-2, a_{223}=-1$, and all the remaining $a_{i_1 i_2 i_3}=0$. Then, tensor $\mathcal{A}$ is an almost $\mathbf{E_0}$-tensor but it is not an almost $\mathbf{C_0}$-tensor. Note that for \(r = 1\), there are three one-dimension principal subtensors of \(\mathcal{A}\). Let \(J_1 = \{1\}\), \(J_2 = \{2\}\), and \(J_3 = \{3\}\). The one-dimension principal subtensors of \(\mathcal{A}\) are \(\mathcal{A}_1^{J_1} = (a_{111}) = (1)\), \(\mathcal{A}_1^{J_2} = (a_{222}) = (0)\), and \(\mathcal{A}_1^{J_3} = (a_{333}) = (1)\). For \(r = 2\), there are three two-dimensional principal subtensors of \(\mathcal{A}\). Let \(J_4 = \{1,2\}\), \(J_5 = \{1,3\}\), and \(J_6 = \{2,3\}\). The two-dimensional principal subtensors of \(\mathcal{A}\) are \(\mathcal{A}_2^{J_4}\), \(\mathcal{A}_2^{J_5}\), and \(\mathcal{A}_2^{J_6}\).
\begin{itemize}
    \item Here \(\mathcal{A}_2^{J_4} = (a_{i_1 i_2 i_3})\) for all \(i_1, i_2, i_3 \in J_4\). Then \(\mathcal{A}_2^{J_4}\) is given by \(a_{111} = 1\), \(a_{112} = -2\), and all the remaining entries of \(\mathcal{A}_2^{J_4}\) are zero. Subsequently, for any vector \(\mathbf{x} = (x_1, x_2)^{T} \in \mathbb{R}^2\),
\[\mathcal{A}_2^{J_4} \mathbf{x}^2 = \begin{pmatrix}x_1^2 - 2x_1 x_2 \\0\end{pmatrix}.\]
    \item Here \(\mathcal{A}_2^{J_5} = (a_{i_1 i_2 i_3})\) for all \(i_1, i_2, i_3 \in J_5\). Then \(\mathcal{A}_2^{J_5}\) is given by \(a_{111} = 1\), \(a_{331} = -2\), \(a_{333} = 1\), and all the remaining entries of \(\mathcal{A}_2^{J_5}\) are zero. Subsequently, for any vector \(\mathbf{x} = (x_1, x_2)^{T} \in \mathbb{R}^2\),
\[\mathcal{A}_2^{J_5} \mathbf{x}^2 = \begin{pmatrix}x_1^2 \\-2x_1 x_3 + x_3^2\end{pmatrix}.\]
    \item Here \(\mathcal{A}_2^{J_6} = (a_{i_1 i_2 i_3})\) for all \(i_1, i_2, i_3 \in J_6\). Then \(\mathcal{A}_2^{J_6}\) is given by \(a_{223} = -1\), \(a_{323} = 1\), \(a_{333} = 1\), and all the remaining entries of \(\mathcal{A}_2^{J_6}\) are zero. Subsequently, for any vector \(\mathbf{x} = (x_1, x_2)^{T} \in \mathbb{R}^2\),
\[\mathcal{A}_2^{J_6} \mathbf{x}^2 = \begin{pmatrix}-x_2 x_3 \\x_2 x_3 + x_3^2\end{pmatrix}.\]
\end{itemize}
Here, each proper principal subtensor of one and two dimensions of tensor \(\mathcal{A}\) is $\mathbf{E_0}$-tensor. For \(r = 3\), tensor \(\mathcal{A}\) is only one principal subtensor of three dimensions. Subsequently, for any vector \(\mathbf{x} = (x_1, x_2)^{T} \in \mathbb{R}^2\),
\[\mathcal{A} \mathbf{x}^2 = \begin{pmatrix}
x_1^2-2x_1 x_2 \\
  -x_1 x_2\\
-2x_1x_3+x_2x_3+x_3^2.
\end{pmatrix}.\]
Let \(\mathbf{y} = (1, 1, 1/2)^{T}\), then \(\mathbf{y} > 0\) and \(\mathcal{A} \mathbf{y}^2 < 0\), and hence, \(\mathcal{A}\) is an almost $\mathbf{E_0}$-tensor. But, tensor \(\mathcal{A}\) is not an almost $\mathbf{C_0}$-tensor as for \(J_4 = \{1, 2\}\), a two-dimensional proper principal subtensor \(\mathcal{A}_2^4\) of tensor \(\mathcal{A}\) is not a $\mathbf{C_0}$-tensor since for \(\mathbf{y} = (1, 1)^T > 0\), \(\mathcal{A}_2^{J_4}\mathbf{y}^3 = -1 < 0\).

\end{example} 
\begin{example}
Consider a tensor $\mathcal{A}$ represented by $\left(a_{i_1 i_2 i_3}\right) \in \mathbb{R}_{3,2}$, where specific entries are defined such that $a_{111}=1, a_{112}=-2,$ and the remaining elements of tensor $\mathcal{A}$ are zero. For \( r = 1 \), it is important to note that there are two one-dimensional principal subtensors of tensor \( \mathcal{A} \). Assume that \(J_1 = \{1\}\) and \(J_2 = \{2\}\). Then, the one-dimensional principal sub-tensors of \(\mathcal{A}\) are \(\mathcal{A}_1^{J_1} = (a_{111}) = (1)\) and \(\mathcal{A}_1^{J_2} = (a_{222}) = (0)\). 

Note that each proper principal subtensor of tensor \(\mathcal{A}\), which is of one dimension, is $\mathbf{C_0}$-tensor, implying that each such subtensor is $\mathbf{E_0}$-tensor. For \(r = 2\), tensor \(\mathcal{A}\) is only one principal subtensor of two dimensions. Subsequently, for any vector \(\mathbf{x} = (x_1, x_2)^{T} \in \mathbb{R}^2\),
\[
\mathcal{A} \mathbf{x}^2 = \begin{pmatrix}
x_1^2 - 2x_1 x_2 \\
0
\end{pmatrix}.
\]
Let $\mathbf{y} = (1, 2)^{T}$, then $\mathbf{y} > 0$ and $\mathcal{A} \mathbf{y}^3 = -3 < 0$. Therefore, $\mathcal{A}$ is not a $\mathbf{C_0}$-tensor, that is, tensor $\mathcal{A}$ is almost $\mathbf{C_0}$. However, tensor $\mathcal{A}$ is not almost $\mathbf{E_0}$-tensor. Since tensor $\mathcal{A}$ is $\mathbf{E_0}$-tensor as it has a nonnegative row subtensor.
\end{example}	

Analogously to the proof presented in Theorem \ref{l1}, the subsequent theorem can be established.
\begin{theorem}\label{mr1}
Suppose $\mathcal{A}$ is a real tensor in $S_{m, n}$. Then, tensor $\mathcal{A}$ is almost $\mathbf{E}$-tensor iff it is an almost $\mathbf{C}$-tensor.
\end{theorem}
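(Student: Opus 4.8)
The plan is to mirror the argument of Theorem \ref{l1} exactly, substituting the strict versions of every notion and every auxiliary result. First I would recall the two structural facts I will lean on: Theorem \ref{c1} in its strict form (every strictly copositive tensor is strictly semi positive) and Theorem \ref{c2} in its strict form (a symmetric strictly semi positive tensor is strictly copositive); both are stated in \cite{Ysong} and should be invoked just as in the proof of Theorem \ref{l1}. The symmetry hypothesis $\mathcal{A}\in S_{m,n}$ is inherited by every principal subtensor, so these equivalences apply to all proper principal subtensors of $\mathcal{A}$ as well.

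For the implication $(i)\Rightarrow(ii)$, suppose $\mathcal{A}$ is almost $\mathbf{E}$: every proper principal subtensor is strictly semi positive, hence by the symmetric version of Theorem \ref{c2} every proper principal subtensor is strictly copositive. It remains to show $\mathcal{A}$ itself is not strictly copositive. By Definition \ref{d2} there is $\mathbf{x}>\mathbf{0}$ with $(\mathcal{A}\mathbf{x}^{m-1})\leq\mathbf{0}$, so
\[
\mathcal{A}\mathbf{x}^m=\mathbf{x}^{\top}(\mathcal{A}\mathbf{x}^{m-1})=\sum_{i=1}^n x_i (\mathcal{A}\mathbf{x}^{m-1})_i\leq 0,
\]
and since $\mathbf{x}\in\mathbb{R}^n_{++}\setminus\{\mathbf{0}\}$ this shows $\mathcal{A}$ fails strict copositivity. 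Hence $\mathcal{A}$ is almost $\mathbf{C}$.

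For $(ii)\Rightarrow(i)$, suppose $\mathcal{A}$ is almost $\mathbf{C}$: every proper principal subtensor is strictly copositive, hence strictly semi positive by the strict form of Theorem \ref{c1}. If $\mathcal{A}$ were strictly semi positive, then being symmetric it would be strictly copositive by the strict form of Theorem \ref{c2}, contradicting that $\mathcal{A}$ is not strictly copositive; therefore $\mathcal{A}$ is not strictly semi positive, and so $\mathcal{A}$ is almost $\mathbf{E}$.

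The only genuinely delicate point — and the place where this proof is less routine than it looks — is the step $\mathcal{A}\mathbf{x}^m\leq 0 \Rightarrow \mathcal{A}$ not strictly copositive. In the almost $\mathbf{E}$ setting the witness $\mathbf{x}$ only gives $(\mathcal{A}\mathbf{x}^{m-1})\leq\mathbf{0}$ rather than $<\mathbf{0}$, so one cannot conclude $\mathcal{A}\mathbf{x}^m<0$; but $\mathcal{A}\mathbf{x}^m\leq 0$ at a strictly positive (hence nonzero) point already contradicts the definition of strict copositivity, which demands $\mathcal{A}\mathbf{y}^m>0$ for all $\mathbf{y}\in\mathbb{R}^n_+\setminus\{\mathbf{0}\}$. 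I would flag this explicitly so the reader sees why the weaker inequality in Definition \ref{d2} still suffices. Everything else is a direct transcription of the proof of Theorem \ref{l1} with $\mathbf{E_0},\mathbf{C_0}$ replaced by $\mathbf{E},\mathbf{C}$.
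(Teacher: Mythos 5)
Your proof is correct and is exactly the argument the paper intends: the paper itself only remarks that Theorem~\ref{mr1} is established ``analogously to the proof presented in Theorem~\ref{l1}'', and your transcription with the strict forms of Theorems~\ref{c1} and~\ref{c2} (both available in \cite{Ysong}) is precisely that analogue, with the helpful extra observation that $\mathcal{A}\mathbf{x}^m\leq 0$ at a nonzero nonnegative point already defeats \emph{strict} copositivity. The one small bridge worth making explicit in the direction $(ii)\Rightarrow(i)$ is that Definition~\ref{d2} asks for an actual witness $\mathbf{x}>\mathbf{0}$ with $\mathcal{A}\mathbf{x}^{m-1}\leq\mathbf{0}$ rather than merely ``$\mathcal{A}$ is not strictly semi positive''; this follows from the characterization of strictly semi positive tensors at the start of Section~3, since all proper principal subtensors are already known to be $\mathbf{E}$-tensors (the paper's own proof of Theorem~\ref{l1} elides the same step).
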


 Now, we shift our focus to the eigenvalues of these special structured tensors. In the next result, we establish a relation between almost ($\mathbf{C}$) $\mathbf{C_0}$-tensors and $H^{++}$-eigenvalues of tensors.

\begin{lemma}\label{mr2}
For a real tensor $\mathcal{A}$ in $S_{m, n}$ that is the ($\mathbf{C}$) $\mathbf{C_0}$-tensor of order $m$ and dimension $n - 1$, the following two statements are equivalent:
		\begin{itemize}
			\item [(i)] $\mathcal{A}$ is not a ($\mathbf{C}$) $\mathbf{C_0}$-tensor.
			\item[(ii)] There exists a (nonpositive) negative $H^{++}$-eigenvalue.
		\end{itemize}
	\end{lemma}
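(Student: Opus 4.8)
The plan is to prove the equivalence by relating the failure of copositivity of $\mathcal{A}$ to the existence of a minimizer of $\mathcal{A}\mathbf{x}^m$ on the simplex (or unit sphere intersected with the nonnegative orthant), and then showing such a minimizer must be an interior point precisely because every proper principal subtensor is copositive. I would first set up the optimization problem $\mu := \min\{\mathcal{A}\mathbf{x}^m : \mathbf{x} \geq \mathbf{0}, \sum_i x_i = 1\}$; by compactness the minimum is attained at some $\mathbf{x}^*$. If $\mathcal{A}$ is copositive of order $n-1$ but not of order $n$, then $\mu < 0$, so $\mathbf{x}^* \neq \mathbf{0}$; moreover $\mathbf{x}^*$ cannot lie on a proper face of the simplex, since on such a face the homogeneous form $\mathcal{A}\mathbf{x}^m$ restricts to $\mathcal{A}_r^J (\mathbf{x}_J)^m$ for a proper index set $J$, which is $\geq 0$ by copositivity of the proper principal subtensor $\mathcal{A}_r^J$. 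Hence $\mathbf{x}^* > \mathbf{0}$.

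Next I would write the first-order (KKT) conditions at the interior minimizer $\mathbf{x}^*$. Since $\mathbf{x}^* $ is interior to the simplex, the only active constraint is $\sum_i x_i = 1$, so there is a multiplier $\nu$ with $\nabla(\mathcal{A}\mathbf{x}^m)|_{\mathbf{x}^*} = \nu \mathbf{1}$. Using $\nabla(\mathcal{A}\mathbf{x}^m) = m\,\mathcal{A}\mathbf{x}^{m-1}$ for symmetric $\mathcal{A}$, this gives $(\mathcal{A}(\mathbf{x}^*)^{m-1})_i = \tfrac{\nu}{m}$ for all $i$. Here the standard trick is to \emph{rescale}: the eigenvalue equation requires $\mathcal{A}\mathbf{x}^{m-1} = \lambda \mathbf{x}^{[m-1]}$, i.e. $(\mathcal{A}\mathbf{x}^{m-1})_i = \lambda x_i^{m-1}$, which is not literally what KKT gives. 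I would instead parametrize the minimization over the set $\{\mathbf{x} > \mathbf{0}: \sum_i x_i^m = 1\}$ (or minimize $\mathcal{A}\mathbf{x}^m / (\sum_i x_i^m)$ over $\mathbb{R}^n_{++}$, which is scale-invariant of degree $0$); then the first-order condition becomes exactly $\mathcal{A}(\mathbf{x}^*)^{m-1} = \lambda (\mathbf{x}^*)^{[m-1]}$ with $\lambda = \mathcal{A}(\mathbf{x}^*)^m = \mu < 0$ (or $\leq 0$ in the $\mathbf{C_0}$ case, where we only get $\mu \leq 0$, and need the \emph{non-}copositivity to force $\mu \leq 0$ with the subtensor copositivity forcing the minimizer interior — one must be slightly careful that $\mu=0$ is still allowed). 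So $\lambda$ is an $H^{++}$-eigenvalue that is negative (resp. nonpositive), giving $(i) \Rightarrow (ii)$.

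For the converse $(ii) \Rightarrow (i)$: if $\mathbf{x}^* > \mathbf{0}$ satisfies $\mathcal{A}(\mathbf{x}^*)^{m-1} = \lambda (\mathbf{x}^*)^{[m-1]}$ with $\lambda < 0$ (resp. $\leq 0$ and, in that case, one also needs $\mathbf{x}^*$ to witness genuine non-copositivity — here I would note that for the strictly copositive case $\lambda<0$ immediately yields $\mathcal{A}(\mathbf{x}^*)^m = \lambda \sum_i (x_i^*)^m < 0 < $ required positivity, and for the copositive case a nonpositive $H^{++}$-eigenvalue gives $\mathcal{A}(\mathbf{x}^*)^m = \lambda\sum (x_i^*)^m \leq 0$, which needs to be $<0$ to contradict $\mathbf{C_0}$, so the statement presumably intends $\lambda$ strictly negative here too, or we accept $=0$ as a boundary non-strict-copositivity witness — I would match the paper's convention), then $\mathcal{A}(\mathbf{x}^*)^m < 0$ (resp. $\leq 0$) with $\mathbf{x}^* \in \mathbb{R}^n_{++}$, directly contradicting $\mathcal{A}$ being $\mathbf{C}$ (resp. $\mathbf{C_0}$). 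Hence $\mathcal{A}$ is not a $(\mathbf{C})\ \mathbf{C_0}$-tensor.

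The main obstacle I anticipate is twofold: (a) getting the first-order condition to land exactly on the $H$-eigenvalue equation rather than the plain KKT stationarity $\mathcal{A}\mathbf{x}^{m-1} = \nu\mathbf{1}$ — this is solved by choosing the right constraint surface ($\sum x_i^m = 1$) or the right scale-invariant objective, and I should double-check the Lagrange computation since $\nabla \sum x_i^m = m\,\mathbf{x}^{[m-1]}$; and (b) handling the boundary/degenerate case in the non-strict ($\mathbf{C_0}$) setting where $\mu = 0$ is \emph{a priori} possible — one must argue carefully that \textbf{not} being copositive forces either $\mu<0$ or a boundary minimizer which is excluded, so in fact a nonpositive interior eigenvalue with the proper-subtensor hypothesis does the job; I would state this matching the parenthetical phrasing of the lemma. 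The symmetry hypothesis is used crucially in (i) $\Rightarrow$ (ii) to pass between $\nabla(\mathcal{A}\mathbf{x}^m)$ and $m\,\mathcal{A}\mathbf{x}^{m-1}$, and compactness plus continuity of the homogeneous polynomial supply the existence of the minimizer.
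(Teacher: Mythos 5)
Your proposal is correct in substance, but it takes a genuinely different route from the paper. The paper's proof is essentially a two-line deduction: it invokes Theorems 4.1 and 4.2 of Song and Qi's ``Necessary and sufficient conditions for copositive tensors'' (reference \cite{nsc}), which say that a symmetric tensor fails to be (strictly) copositive if and only if \emph{some principal subtensor} has a (nonpositive) negative $H^{++}$-eigenvalue, and then observes that since all proper principal subtensors are assumed (strictly) copositive, the offending subtensor must be $\mathcal{A}$ itself. You instead re-derive the needed special case from first principles: minimize $\mathcal{A}\mathbf{x}^m$ over the compact set $\{\mathbf{x}\ge\mathbf{0}:\sum_i x_i^m=1\}$, use copositivity of the proper principal subtensors to force any minimizer with nonpositive value into the interior of the orthant, and read off the $H$-eigenvalue equation $\mathcal{A}(\mathbf{x}^*)^{m-1}=\mu\,(\mathbf{x}^*)^{[m-1]}$ from the Lagrange condition (symmetry being needed to identify $\nabla(\mathcal{A}\mathbf{x}^m)$ with $m\,\mathcal{A}\mathbf{x}^{m-1}$). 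This is exactly the variational argument underlying the cited Song--Qi theorems, so what your approach buys is self-containedness at the cost of length; what the paper's approach buys is brevity at the cost of outsourcing all the analysis. Your identification of the two delicate points --- landing on the eigenvalue equation by normalizing with $\sum_i x_i^m=1$ rather than $\sum_i x_i=1$, and showing the minimizer is interior even when $\mu\le 0$ in the strict case --- is exactly right, and both are handled correctly.

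One small cleanup is needed: you hedge at length in the converse direction about whether the $\mathbf{C_0}$ case needs $\lambda<0$ or $\lambda\le 0$, but the lemma's parenthetical convention already resolves this --- the base reading pairs $\mathbf{C_0}$ with a \emph{negative} $H^{++}$-eigenvalue (so $\mathcal{A}(\mathbf{x}^*)^m=\lambda\sum_i(x_i^*)^m<0$ cleanly contradicts copositivity), while the parenthetical reading pairs $\mathbf{C}$ with a \emph{nonpositive} one (so $\mathcal{A}(\mathbf{x}^*)^m\le 0$ with $\mathbf{x}^*\neq\mathbf{0}$ contradicts strict copositivity). Once the pairing is fixed, no case in either direction requires the extra care you flag, and the proof closes without further argument.
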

 \begin{proof}
    $(i) \iff(ii)$. By \cite[Theorems 4.1 and 4.2]{nsc}, $\mathcal{A}$ is not a ($\mathbf{C}$) $\mathbf{C_0}$-tensor if and only if a principal subtensor of tensor $\mathcal{A}$ has a (nonpositive) negative $H^{++}$-eigenvalue, that is, a principal subtensor of tensor $\mathcal{A}$ has a (nonpositive) negative eigenvalue along with the corresponding eigenvector, which is positive. As tensor $\mathcal{A}$ is a ($\mathbf{C}$) $\mathbf{C_0}$-tensor of order $m$ and dimension $n-1$, such a principal subtensor has to be $\mathcal{A}$ itself.
 \end{proof}

In the next result, we establish a relation between the almost ($\mathbf{E}$) $\mathbf{E_0}$-tensors and $H^{++}$-eigenvalues of tensors under symmetric assumption.
\begin{theorem}
For a symmetric tensor $\mathcal{A} \in \mathbb{R}_{m, n}$, if it is an almost ($\mathbf{E}$) $\mathbf{E_0}$-tensor, then it possesses a negative (nonpositive) $H^{++}$-eigenvalue.
\end{theorem}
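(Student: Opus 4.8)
The plan is to push the hypothesis through the symmetric equivalences already established and then quote Lemma~\ref{mr2}. Assume $\mathcal{A}\in S_{m,n}$ is an almost ($\mathbf{E}$) $\mathbf{E_0}$-tensor. Since $\mathcal{A}$ is symmetric, Theorem~\ref{mr1} (in the strict case) together with Theorem~\ref{l1} (in the non-strict case) shows that $\mathcal{A}$ is an almost ($\mathbf{C}$) $\mathbf{C_0}$-tensor; that is, $\mathcal{A}$ itself is not ($\mathbf{C}$) $\mathbf{C_0}$ while every proper principal subtensor of $\mathcal{A}$ is ($\mathbf{C}$) $\mathbf{C_0}$. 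For completeness I would also record the direct reason $\mathcal{A}$ fails ($\mathbf{C}$) $\mathbf{C_0}$: the witnessing vector $\mathbf{x}>\mathbf{0}$ with $\mathcal{A}\mathbf{x}^{m-1}\le\mathbf{0}$ (resp. $<\mathbf{0}$) gives $\mathcal{A}\mathbf{x}^{m}=\mathbf{x}^{\top}(\mathcal{A}\mathbf{x}^{m-1})\le 0$ (resp. $<0$), so $\mathcal{A}$ is not ($\mathbf{C}$) $\mathbf{C_0}$.

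Next I would verify that $\mathcal{A}$ meets the standing hypothesis of Lemma~\ref{mr2}. A principal subtensor of a (strictly) copositive tensor is again (strictly) copositive, so the fact that all $(n-1)$-dimensional proper principal subtensors of $\mathcal{A}$ are ($\mathbf{C}$) $\mathbf{C_0}$ already makes $\mathcal{A}$ a ($\mathbf{C}$) $\mathbf{C_0}$-tensor of order $m$ and dimension $n-1$. Hence Lemma~\ref{mr2} applies, and since condition (i) there holds ($\mathcal{A}$ is not ($\mathbf{C}$) $\mathbf{C_0}$), condition (ii) yields a (nonpositive) negative $H^{++}$-eigenvalue; moreover the proof of Lemma~\ref{mr2} identifies the subtensor carrying it with $\mathcal{A}$ itself, so this is an $H^{++}$-eigenvalue of $\mathcal{A}$, which is exactly the conclusion.

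The whole argument is a chaining of results already proved, so there is no hard computational core. The only points needing care are (a) checking that the reduction from ``every proper principal subtensor is copositive'' to ``copositive of order $m$ and dimension $n-1$'' is legitimate, and (b) keeping the strict/non-strict parentheticals aligned, so that the almost $\mathbf{E}$ case routes through the $\mathbf{C}$-branch of Lemma~\ref{mr2} and the almost $\mathbf{E_0}$ case through the $\mathbf{C_0}$-branch, producing the nonpositive and the negative $H^{++}$-eigenvalue respectively.
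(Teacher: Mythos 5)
Your proposal is correct and follows essentially the same route as the paper: convert the symmetric almost ($\mathbf{E}$) $\mathbf{E_0}$ hypothesis into almost ($\mathbf{C}$) $\mathbf{C_0}$ via Theorems~\ref{l1} and~\ref{mr1}, then invoke Lemma~\ref{mr2}. You are in fact slightly more careful than the paper's two-line proof, since you explicitly check that ``every proper principal subtensor is (strictly) copositive'' supplies the standing hypothesis of Lemma~\ref{mr2} and you keep the strict/non-strict parentheticals correctly paired.
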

\begin{proof}
Assume tensor $\mathcal{A}$ is a symmetric almost ($\mathbf{E}$) $\mathbf{E_0}$-tensor. By Theorems \ref{l1} and \ref{mr1}, tensor $\mathcal{A}$ is an almost ($\mathbf{E}$) $\mathbf{E_0}$-tensor. Consequently, the result is derived from Lemma \ref{mr2}.
\end{proof}



\section{Conclusions}
This paper offers a theoretical framework for detecting and constructing (strictly) semi-positive tensors, particularly focusing on cases where every proper principal subtensor belongs to these specialized tensor classes.

\section*{Acknowledgement}


\end{document}